\definecolor{verylight}{gray}{0.97}
\definecolor{light}{gray}{0.9}
\definecolor{medium}{gray}{0.85}
\definecolor{dark}{gray}{0.6}
 \def\NZQ{\mathbb}               
 \def\QQ{{\NZQ Q}}
 \def\ZZ{{\NZQ Z}}
 \def\RR{{\NZQ R}}
 \def\frk{\mathfrak}               
 \def\mm{{\frk m}}
 \def\G{{\mathcal G}}
 \def\B{{\mathcal B}}
 \def\P{{\mathcal P}}
 \def\ab{{\mathbf a}}
 \def\xb{{\mathbf x}}
 \def\cb{{\mathbf c}}
 \def\opn#1#2{\def#1{\operatorname{#2}}} 
 \opn\chara{char} \opn\length{\ell} \opn\pd{pd} \opn\rk{rk}
 \opn\projdim{proj\,dim} \opn\injdim{inj\,dim} \opn\rank{rank}
 \opn\depth{depth} \opn\grade{grade} \opn\height{height}
 \opn\embdim{emb\,dim} \opn\codim{codim}
 \opn\Tr{Tr} \opn\bigrank{big\,rank}
 \opn\superheight{superheight}\opn\lcm{lcm}
 \opn\trdeg{tr\,deg}
 \opn\reg{reg} \opn\lreg{lreg} \opn\ini{in} \opn\lpd{lpd}
 \opn\size{size} \opn\sdepth{sdepth}
 \opn\link{link}\opn\fdepth{fdepth}\opn\lex{lex}
 \opn\tr{tr}
 \opn\type{type}
 \opn\Borel{Borel}
 \opn\div{div} \opn\Div{Div} \opn\cl{cl} \opn\Cl{Cl}
 \opn\Spec{Spec} \opn\Supp{Supp} \opn\supp{supp} \opn\Sing{Sing}
 \opn\Ass{Ass} \opn\Min{Min}\opn\Mon{Mon}
 \opn\Ann{Ann} \opn\Rad{Rad} \opn\Soc{Soc}
 \opn\Im{Im} \opn\Ker{Ker} \opn\Coker{Coker} \opn\Am{Am}
 \opn\Hom{Hom} \opn\Tor{Tor} \opn\Ext{Ext} \opn\End{End}
 \opn\Aut{Aut} \opn\id{id}
 \opn\nat{nat}
 \opn\pff{pf}
 \opn\Pf{Pf} \opn\GL{GL} \opn\SL{SL} \opn\mod{mod} \opn\ord{ord}
 \opn\Gin{Gin} \opn\Hilb{Hilb}\opn\sort{sort}
 \opn\PF{PF}\opn\Ap{Ap}
 \opn\aff{aff} \opn
\opn\relint{relint} \opn\st{st}
 \opn\lk{lk} \opn\cn{cn} \opn\core{core} \opn\vol{vol}  \opn\inp{inp} \opn\nilpot{nilpot}
 \opn\link{link} \opn\star{star}\opn\lex{lex}\opn\set{set}
 \opn\width{wd}
 \opn\Fr{F}
 \opn\QF{QF}
 \opn\G{G}
 \opn\type{type}\opn\res{res}
 \opn\gr{gr}
 \def\pot#1#2{#1[\kern-0.28ex[#2]\kern-0.28ex]}
 \opn\dirlim{\underrightarrow{\lim}}
 \opn\inivlim{\underleftarrow{\lim}}
 \let\union=\cup
 \let\sect=\cap
 \let\dirsum=\oplus
 \let\tensor=\otimes
 \let\iso=\cong
 \let\Dirsum=\bigoplus
 \let\to=\rightarrow
 \def\Implies{\ifmmode\Longrightarrow \else
         \unskip${}\Longrightarrow{}$\ignorespaces\fi}
 \def\implies{\ifmmode\Rightarrow \else
         \unskip${}\Rightarrow{}$\ignorespaces\fi}
 \def\iff{\ifmmode\Longleftrightarrow \else
         \unskip${}\Longleftrightarrow{}$\ignorespaces\fi}
 \newtheorem{Theorem}{Theorem}[section]
 \newtheorem{Lemma}[Theorem]{Lemma}
 \newtheorem{Corollary}[Theorem]{Corollary}
 \newtheorem{Proposition}[Theorem]{Proposition}
 \let\epsilon\varepsilon
 \let\kappa=\varkappa
 \def\qed{\ifhmode\textqed\fi
       \ifmmode\ifinner\quad\qedsymbol\else\dispqed\fi\fi}
 \def\textqed{\unskip\nobreak\penalty50
        \hskip2em\hbox{}\nobreak\hfil\qedsymbol
        \parfillskip=0pt \finalhyphendemerits=0}
 \def\dispqed{\rlap{\qquad\qedsymbol}}
 \opn\dis{dis}
 \def\pnt{{\raise0.5mm\hbox{\large\bf.}}}
 \opn\Lex{Lex}
\begin{document}
\title {The relevance of Freiman's theorem for combinatorial commutative algebra}

\author {J\"urgen Herzog, Takayuki Hibi and  Guangjun Zhu$^{^*}$}

\address{J\"urgen Herzog, Fachbereich Mathematik, Universit\"at Duisburg-Essen, Campus Essen, 45117
Essen, Germany} \email{juergen.herzog@uni-essen.de}

\address{Takayuki Hibi, Department of Pure and Applied Mathematics, Graduate School of Information Science and Technology,
Osaka University, Toyonaka, Osaka 560-0043, Japan}
\email{hibi@math.sci.osaka-u.ac.jp}

\address{Guangjun Zhu, School of Mathematical Sciences, Soochow University,
 Suzhou 215006, P. R. China}\email{zhuguangjun@suda.edu.cn}

\dedicatory{ }

\begin{abstract}
Freiman's theorem gives a lower bound for  the cardinality of the doubling of a finite set in $\RR^n$. In this paper we give an interpretation of  his theorem for monomial ideals and their fiber cones. We call a quasi-equigenerated monomial ideal a Freiman ideal, if the  set of its exponent vectors  achieves Freiman's  lower bound for its doubling. Algebraic characterizations of Freiman ideals are given,  and finite simple graphs are classified whose edge ideals or matroidal ideals of its cycle matroids are Freiman ideals.
\end{abstract}

\thanks{* Corresponding author.}

\subjclass[2010]{Primary 13C99; Secondary 13A15, 13E15, 13H05, 13H10.}


\keywords{Monomial ideal,  Freiman ideal, Freiman graph, Freiman matroid, fiber cone.}

\maketitle

\setcounter{tocdepth}{1}

\section*{Introduction}

Let $X$ be a finite subset of $\ZZ^n$, and let $A(X)$ be the affine hull of the set $X$, that is, the smallest affine subspace of $\QQ^n$ containing
$X$.   The doubling of $X$ is the set $2X=\{a+b\: a,b\in X\}$. The starting point of this paper is the following celebrated theorem of Freiman \cite{F1}:
\begin{eqnarray}
\label{freimaninequality}
|2X|\geq (d+1)|X|-{d+1 \choose 2},
\end{eqnarray}
where $d$ is the dimension of the affine space $A(X)$.

Now let $K$ be a field and let $I$ be a graded ideal in the polynomial ring $S=K[x_1,\ldots,x_n]$. We denote by  $\mu(I)$  the minimal  number of generators of $I$  and by $\ell(I)$ the  analytic spread of $I$, that is, the Krull dimension of the fiber cone
$F(I)=\dirsum_{k\geq 0}I^k/\mm I^k$. Here $\mm=(x_1,\ldots,x_n)$ is the graded maximal ideal of $S$. It has been noticed in \cite[Theorem 1.9]{HMZ} that Freiman's theorem  has an interesting consequence regarding the minimal  number of generators of the square of a monomial ideal. Namely, it was shown that if $I\subset S$ is a monomial ideal with the property that  all generators of $I$ have the same degree, then  $\mu(I^2)\geq \ell(I)\mu(I)-{\ell(I)\choose 2}$.

For the application of Freiman's theorem to a  monomial ideal $I\subset S$  and its fiber cone $F(I)$, it is sufficient to require that $I$  is quasi-equigenerated, by which we mean that the exponent vectors of all generators of $I$  lie in a hyperplane of $\ZZ^n$. This guarantees that the set of  exponent vectors of the monomials  $u\in G(I^2)$  is obtained from the set of exponent vectors of the monomials  $v\in G(I)$ by doubling. Here,  $G(I)$ denotes the unique minimal set of monomial generators of  a monomial ideal~$I$.

We call a  quasi-equigenerated monomial ideal $I\subset S$  a Freiman ideal,   if $\mu(I^2)= \ell(I)\mu(I)-{\ell(I)\choose 2}$. Of course,  $I$ is Freiman if and only if equality holds in (\ref{freimaninequality})  for the set $X$ of exponent vectors of the  monomials  $u\in G(I)$. The sets $X\subset \ZZ^n$ for which $|2X|= (d+1)|X|-{d+1 \choose 2}$ are characterized by Stanescu in \cite{Sta}. Here,  in Theorem~\ref{characterization}, we give  several equivalent conditions for a  quasi-equigenerated monomial ideal $I\subset S$ to be a Freiman ideal.  For example, it is shown that $I$ is Freiman if and only if the fiber cone  $F(I)$ of $I$ is Cohen--Macaulay and the  defining ideal of $F(I)$ has a $2$-linear resolution. This homological characterization of Freiman ideals can be deduced from the  fact  that for a Freiman ideal not only one has  $\mu(I^2)= \ell(I)\mu(I)-{\ell(I)\choose 2}$,  but also  $\mu(I^{k})={\ell+k-2\choose k-1}\mu(I)-(k-1){\ell+k-2\choose k}$ for all $k\geq 1$. Indeed, this fact is a consequence of a result of B\"{o}r\"{o}czky, Santos and  Serra \cite[Corollary 7]{BSS}. Now this formula for the minimal  number of monomial   generators of $I^k$ of a Freiman ideal $I$ easily yields that $h_i=0$  for all $i\geq 2$, where $(1,h_1,h_2,\ldots)$ is the $h$-vector of the  fiber cone $F(I)$ of $I$. Then we apply  a result of Eisenbud and Goto  \cite[Corollary 4.5]{EG}  and  obtain that  $F(I)$ is normal. Since $F(I)$ is a toric  ring, Hochster's theorem \cite{Ho} finally yields that $F(I)$  is Cohen--Macaulay.

Even more surprising than the inequality  $\mu(I^2)\geq  \ell(I)\mu(I)-{\ell(I)\choose 2}$, and seemingly not noticed before in the study of toric rings, are  the following consequences of the above mentioned results about the $h$-vector of the fiber cone  $F(I)$ and of  Freiman's theorem,   which may phrased as follows: let $A$ be a standard graded toric $K$-algebra with $h$-vector $(1,h_1,h_2,\ldots)$. Then $h_2\geq 0$, and if $h_2=0$, then $h_i=0$ for all $i\geq 2$. The inequality $h_2\geq 0$ was first observed   in \cite[Corollary 2.6]{HMZ}.

Being a Freiman ideal is a very restrictive condition. Therefore, one can expect that for natural classes of monomial ideals arising in combinatorial context a nice classification of Freiman ideals is possible. We illustrate  this in two cases. In both cases we consider a finite simple graph $G$.  We call $G$ a Freiman graph, if its edge ideal $I(G)$   is  a Freiman ideal. This classification can be reduced to the case that $G$ is connected, see Corollary~\ref{connected}. In Theorem~\ref{hibi} it is shown that if $G$ is connected and  $H$ is the subgraph of $G$ whose  edges are the edges of all the  $4$-cycles of $G$, then $G$ is a Freiman graph  if and only if  there  exist no primitive even walks in $G$, or otherwise $H$ is bipartite of type $(2,s)$ for some integer $s$,  and there  exist no primitive even walks in $G$  of length $>4$. The proof is inspired by  the  classification of edge ideals with $2$-linear resolution, due to Ohsugi and Hibi \cite{OH1}. In the special case that $G$ is bipartite this classification is even more explicit. Indeed, in Corollary~\ref{bipartite}  it is shown, that if   $G$ is a connected bipartite graph,  then $G$ is Freiman if and only if $G$ is a tree,
or otherwise  there exists a bipartite subgraph $H$  of $G$ of type $(2,s)$ for some integer $s$, a subset $S$ of $V(H)$ and for each $w\in S$  an induced tree $T_w$ of $G$ with $V(H)\sect V(T_w)=\{w\}$.

The other case studied here is that of the matroidal ideal $I_M$ of the cycle matroid $M$ of $G$. We call a   matroid $M$ whose matroidal ideal $I_M$ is a Freiman ideal, a Freiman matroid. In Theorem~\ref{wehope} we show that  the cycle matroid of $G$ is Freiman if and only if $G$  contains at most one cycle. It is  a challenging open problem to classify all Freiman matroids. A few  results in this direction, even for polymatroids, can be found in \cite{HZ}.

As an interesting consequence of the classification of the cycle matroids which are Freiman, it is shown in Theorem~\ref{regularity} that if $r$ denotes  the regularity of the base ring of the cycle matroid of a graph $G$, then,  unless the base ring is a polynomial ring,  one has  $3\leq r\leq e$,
where $e$ is the number of the edges of  $G$.

\section{Freiman ideals and their fiber cones}

Let $K$ be a field and $S=K[x_1,\ldots,x_n]$ the polynomial ring in $n$ indeterminates over $K$, and  let $I\subset S$ be a monomial ideal. The unique minimal set of monomial generators of $I$ will be denoted by $G(I)$.   The number $\mu(I) =|G(I)|$ is  the minimal number of generators of $I$.  We write $\xb^{\cb}$ for the monomial $x_1^{c_1}\cdots x_n^{c_n}$ where $\cb=(c_1,\ldots,c_n)$.

Let $\ab=(a_1,\ldots,a_n)\in \mathbb{Z}^n$ be an integer vector with all $a_i>0$. We say that $I$ is {\em equigenerated (of degree $d$) with respect to $\ab$} or simply say that $I$ is {\em quasi-equigenerated}, if there exists an integer $d$ such that
\[
d=\langle \ab,\cb \rangle \quad \text{for all} \quad \xb^{\cb}\in G(I).
\]
Here $\langle \ab,\cb \rangle=\sum_{i=1}^n a_ic_i$ denotes the standard scalar product of $\ab$ and $\cb$. The ideal $I$ is  called {\em equigenerated} if $\ab=(1,1,\ldots,1)$.

The Krull dimension of the fiber cone $F(I)=\Dirsum_{k\geq 0}I^k/\mm I^k$ is called the {\em analytic spread} of $I$, and denoted $\ell(I)$. One always has
\[
\height I\leq \ell(I)\leq\min\{\mu(I),n\}.
\]

 Let $G(I)=\{u_1,\ldots,u_m\}$, and let $T=K[y_1,\ldots,y_m]$ be the polynomial ring over $K$ in the indeterminates $y_1,\ldots,y_m$.  Consider  the $K$-algebra homomorphism $\varphi\: T\to F(I)$ with $y_j\mapsto u_j+\mm I$ for $j=1,\ldots,m$. Then  $F(I)\iso T/J$, where  $J=\Ker \varphi$. Since $I$  is  quasi-equigenerated, it follows that $F(I)\iso K[u_1t,\ldots u_mt]\subset T[t]$, where $t$ is a new indeterminate over $T$.  Thus we see that $J$ is a toric prime ideal generated by binomials in $T$  which  are homogeneous with respect to the standard grading of $T$.

\medskip
The following theorem is a consequence of a famous theorem of Freiman \cite{F1} and its generalizations by  B\"or\"oczky
et al \cite{BSS}.

\begin{Theorem}
\label{freiman}
Let $I\subset S$ be a quasi-equigenerated monomial ideal whose  analytic
spread is  $\ell(I)$. Then
$$\mu(I^{k})\geq {\ell(I)+k-2\choose k-1}\mu(I)-(k-1){\ell(I)+k-2\choose k}$$ for all $k\geq 1$.
\end{Theorem}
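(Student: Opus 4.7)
The plan is to translate the inequality into a purely combinatorial statement about the set $X$ of exponent vectors of $G(I)$, and then to invoke the generalization of Freiman's theorem due to B\"or\"oczky, Santos and Serra \cite{BSS} that is cited in the introduction. Write $G(I) = \{u_1,\ldots,u_m\}$ with $u_j = \xb^{\cb_j}$, and set $X = \{\cb_1,\ldots,\cb_m\} \subset \ZZ^n$.

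The first step is to identify $\mu(I^k)$ with the cardinality of the $k$-fold sumset $kX = \{a_1 + \cdots + a_k : a_i \in X\}$. Because $I$ is quasi-equigenerated with respect to some $\ab$, every product $u_{i_1}\cdots u_{i_k}$ has the same $\ab$-weight $kd$, so no such product properly divides another. Hence each $u_{i_1}\cdots u_{i_k}$ is a minimal generator of $I^k$, and two such products yield the same minimal generator precisely when their exponent vectors coincide. This gives $\mu(I^k) = |kX|$; in particular $\mu(I) = |X|$. The second step is to identify $\ell(I)$ with $\dim A(X) + 1$. The fiber cone is isomorphic to the toric algebra $K[u_1 t,\ldots,u_m t]$, whose Krull dimension equals the rank of the $\ZZ$-module generated by the exponent vectors $(\cb_j,1)$. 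Writing $(\cb_j,1) = (\cb_1,1) + (\cb_j - \cb_1, 0)$ shows that this rank equals $1 + \rank_{\ZZ}\langle \cb_j - \cb_1 : j \geq 2 \rangle$, which coincides with $1 + \dim A(X)$.

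With these identifications in place, the theorem follows immediately from \cite[Corollary 7]{BSS}, which asserts that for any finite $X \subset \ZZ^n$ with $d := \dim A(X)$ and any $k \geq 1$,
$$|kX| \geq \binom{d+k-1}{k-1}|X| - (k-1)\binom{d+k-1}{k}.$$
Substituting $d = \ell(I) - 1$, $|X| = \mu(I)$ and $|kX| = \mu(I^k)$ delivers the desired inequality, and the case $k = 2$ recovers both Freiman's original bound (\ref{freimaninequality}) and the inequality $\mu(I^2) \geq \ell(I)\mu(I) - \binom{\ell(I)}{2}$ from \cite[Theorem~1.9]{HMZ}. Since B\"or\"oczky--Santos--Serra do the combinatorial heavy lifting, the only step requiring any care is the dictionary between the algebraic invariants $\mu(I^k),\ell(I)$ and the combinatorial invariants $|kX|,\dim A(X)$; the quasi-equigenerated hypothesis is exactly what makes this dictionary exact, since it simultaneously ensures that no divisibility collapses occur among products of generators and that all exponent vectors lie in a single affine hyperplane.
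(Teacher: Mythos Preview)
Your proof is correct and follows essentially the same strategy as the paper: translate $\mu(I^k)$ to $|kX|$ via the quasi-equigenerated hypothesis, identify $\ell(I) = \dim A(X) + 1$, and invoke the B\"or\"oczky--Santos--Serra sumset inequality. The paper cites \cite[Corollary~2]{BSS} rather than Corollary~7 for the inequality and delegates the identity $\ell(I) = d+1$ to \cite[Theorem~1.9]{HMZ}, whereas you supply a short direct argument for it; otherwise the proofs coincide.
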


\begin{proof}
Let $S(I)\subset \mathbb{Z}^n$ be the set of exponent vectors of the elements
of $G(I)$. Since $I$ is  a quasi-equigenerated monomial ideal,  $I^k$ are quasi-equigenerated monomial ideals for all $k\geq 1$. It follows that $\mu(I^k)=|S(I^k)|=|kS(I)|$ for all $k\geq 1$.
Let $d$ be the Freiman dimension of $S(I)$, then  from \cite[Corollary 2]{BSS},
\begin{eqnarray*}
|kS(I)|&\geq & {d+k-1\choose k-1}|S(I)|-(k-1){d+k-1\choose k}\\
&=& {d+k-1\choose k-1}\mu(I)-(k-1){d+k-1\choose k}.
\end{eqnarray*}

On the other hand, by \cite[Theorem 1.9]{HMZ}  one has $\ell(I)=d+1$. Thus the assertion follows.
\end{proof}

The above theorem implies in particular that $\mu(I^2)\geq l(I)\mu(I)-{l(I)\choose 2}$. In \cite{HZ}, the first and third author   of this paper called an equigenerated monomial ideal a Freiman ideal, if equality holds. Here we extend this definition and call a quasi-equigenerated ideal $I$ a {\em Freiman ideal}, if $$\mu(I^2)= l(I)\mu(I)-{l(I)\choose 2}.$$

\medskip
In the next theorem we will give some  characterizations of Freiman ideals  in terms of the Hilbert  series  of their  fiber cones.  To this end we recall some concepts from commutative algebra.

Let $R$ be a standard graded $K$-algebra. We denote its Krull dimension by $\dim R$ and its embedding dimension by  $\embdim R$. The Hilbert series $\Hilb_{R}(t)=\sum\limits_{k\geq 0}\dim_K R_kt^{k}$ of $R$ is  of the form
\[
\Hilb_{R}(t)=h(R;t)/(1-t)^{\dim R},
\]
where $h(R;t)=1+h_{1}t+h_{2}t^2+\cdots$ is a polynomial with $h_1=\embdim R-\dim R$. The polynomial $h(R;t)$  is called the $h$-polynomial of $R$,  and the finite coefficient vector of $h(R;t)$ is called the $h$-vector of $R$.

The multiplicity of $R$ is defined to be $e(R)=\sum_{i\geq 0}h_i$.
By Abhyankar \cite{A} it is known that
\[
\embdim R\leq e(R)+\dim R-1,
\]
if $R$ is a domain. The same inequality holds if $R$ is Cohen--Macaulay \cite{Sa}. The $K$-algebra  $R$   is said to have {\em minimal multiplicity} if
$\embdim(R)= e(R)+\dim R-1.$

\medskip
We  first observe

\begin{Corollary}
\label{remarkable}
Let $I\subset S$ be a quasi-equigenerated monomial ideal  with analytic
spread $\ell$, and let
  $(1,h_1,h_2,\ldots)$ be the $h$-vector of the fiber cone $F(I)$ of $I$. Then
\[
  \sum\limits_{i=2}^{k}{\ell+k-i-1\choose k-i}h_{i}\geq 0\quad \text{for all} \quad  k\geq 2.
\]
In particular, $h_2\geq 0$.
\end{Corollary}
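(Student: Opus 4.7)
The plan is to extract the inequality by comparing two expressions for $\mu(I^k)$: one obtained from the Hilbert series of the fiber cone (which encodes the $h$-vector), and one provided by Theorem~\ref{freiman}. Since $F(I) = \bigoplus_{k\geq 0} I^k/\mm I^k$ is a standard graded $K$-algebra of Krull dimension $\ell$, and since $I$ is quasi-equigenerated so that $\dim_K F(I)_k = \mu(I^k)$ for all $k \geq 1$, we have
\[
\Hilb_{F(I)}(t) = 1 + \sum_{k\geq 1} \mu(I^k) t^k = \frac{1 + h_1 t + h_2 t^2 + \cdots}{(1-t)^\ell}.
\]

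First I would expand $(1-t)^{-\ell} = \sum_{j \geq 0} \binom{\ell + j - 1}{j} t^j$ and equate coefficients of $t^k$ on both sides. This yields, for every $k \geq 1$, the identity
\[
\mu(I^k) = \binom{\ell + k - 1}{k} + h_1 \binom{\ell + k - 2}{k-1} + \sum_{i=2}^{k} \binom{\ell + k - i - 1}{k - i} h_i,
\]
where I use $h_0 = 1$. Next I would substitute $h_1 = \embdim F(I) - \dim F(I) = \mu(I) - \ell$ and simplify the first two terms. The key elementary identity is
\[
k \binom{\ell + k - 2}{k} = (\ell - 1) \binom{\ell + k - 2}{k-1},
\]
which, combined with Pascal's rule $\binom{\ell + k - 1}{k} = \binom{\ell + k - 2}{k} + \binom{\ell + k - 2}{k-1}$, rewrites the first two terms precisely as $\binom{\ell+k-2}{k-1} \mu(I) - (k-1)\binom{\ell+k-2}{k}$. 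Thus
\[
\mu(I^k) = \binom{\ell+k-2}{k-1}\mu(I) - (k-1)\binom{\ell+k-2}{k} + \sum_{i=2}^{k} \binom{\ell+k-i-1}{k-i} h_i.
\]

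Finally, I would apply Theorem~\ref{freiman}, which states that the first two terms on the right-hand side already form a lower bound for $\mu(I^k)$. Subtracting this lower bound gives
\[
\sum_{i=2}^{k} \binom{\ell+k-i-1}{k-i} h_i \geq 0
\]
for every $k \geq 2$, which is the claimed inequality. Specializing to $k = 2$, the sum reduces to the single term $h_2$, yielding $h_2 \geq 0$.

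No step poses a real obstacle: Theorem~\ref{freiman} does all the heavy lifting. The only thing requiring some care is the bookkeeping in step two, where the precise binomial identity $k\binom{\ell+k-2}{k} = (\ell-1)\binom{\ell+k-2}{k-1}$ must be invoked to match the constant and linear contributions from the $h$-vector against the form of Freiman's lower bound.
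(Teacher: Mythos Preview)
Your proposal is correct and follows essentially the same approach as the paper: expand the Hilbert series of $F(I)$ to express $\mu(I^k)$ in terms of the $h_i$, substitute $h_1=\mu(I)-\ell$, rewrite the first two terms as $\binom{\ell+k-2}{k-1}\mu(I)-(k-1)\binom{\ell+k-2}{k}$, and then invoke Theorem~\ref{freiman}. The only difference is cosmetic---you make the binomial identity $k\binom{\ell+k-2}{k}=(\ell-1)\binom{\ell+k-2}{k-1}$ explicit, whereas the paper just records the outcome of the simplification.
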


\begin{proof} The Hilbert series of the fiber cone $F(I)$ of $I$ is
\begin{eqnarray*}
\Hilb_{F(I)}(t)&=&\sum\limits_{k\geq 0}\mu(I^{k})t^{k}=\frac{1+h_{1}t+h_{2}t^2+h_{3}t^3+h_{4}t^4+\cdots}{(1-t)^{\ell}}
\end{eqnarray*}
\begin{eqnarray*}
&=&(1+h_{1}t+h_{2}t^2+\cdots)(1+\ell t+{\ell+1\choose 2}t^2+{\ell+2\choose 3}t^3+\cdots)\\
&=&1+(h_{1}+\ell)t+(h_{1}\ell+{\ell+1\choose 2}+h_{2})t^2+\cdots\\
&+&[{\ell+k-1\choose k}+\sum\limits_{i=1}^{k}{\ell+k-i-1\choose k-i}h_{i}]t^k+\cdots.
\end{eqnarray*}
It follows  that
 $$\mu(I)=h_{1}+\ell,\ \mbox{ and}\  \ \mu(I^2)=h_{1}\ell+{\ell+1\choose 2}+h_{2}=\ell\mu(I)-{\ell\choose 2}+h_{2}.$$
Moreover,  for all $k\geq 3$ we have
\begin{eqnarray*}\mu(I^k)&=&{\ell+k-1\choose k}+{\ell+k-2\choose k-1}h_{1}+{\ell+k-3\choose k-2}h_{2}+\cdots+{\ell\choose 1}h_{k-1}+h_{k}\\
&=&{\ell+k-1\choose k}+{\ell+k-2\choose k-1}(\mu(I)-\ell)+\sum\limits_{i=2}^{k}{\ell+k-i-1\choose k-i}h_{i}   \\
&=&{\ell+k-2\choose k-1}\mu(I)-(k-1){\ell+k-2\choose k}+\sum\limits_{i=2}^{k}{\ell+k-i-1\choose k-i}h_{i}.
\end{eqnarray*}
Therefore, Theorem \ref{freiman} implies that $h_2\geq 0$ and
$\sum\limits_{i=2}^{k}{\ell+k-i-1\choose k-i}h_{i}\geq 0$ for all $k\geq 2$.
\end{proof}

Let  $I$ be a graded ideal.  An ideal  $J\subseteq I$  is  called  a {\it reduction} of  $I$ if $I^{k+1}=JI^{k}$
 for some nonnegative integer $k$.  The
{\it reduction number} of $I$ with respect to $J$ is defined to be   $$r_{J}(I)=\min\{k \mid I^{k+1}=JI^{k}\}.$$
A reduction $J$ of $I$ is called a {\em minimal reduction}  if  it does not properly contain any other  reduction of $I$.  If $I$ is equigenerated and $|K|=\infty$, then a graded  minimal reduction of $I$ exists. In this case
the {\it reduction number} of $I$ is defined to be the number  $$r(I)=\min\{ r_{J}(I)\mid J\ \mbox{ is a minimal reduction of}\ I\}.$$

\medskip
Now we are ready  to present the  main result of this section.

\begin{Theorem}
\label{characterization}
Let $I\subset S$ be a quasi-equigenerated monomial ideal with  analytic
spread  $\ell$,  and let $(1,h_1,h_2,\ldots)$ be the $h$-vector of the fiber cone $F(I)$ of $I$. Write $F(I)=T/J$, where $T$ is a polynomial ring over $K$, and $J$ is contained in the square of the graded maximal ideal of $T$.
Then the following conditions are equivalent:
\begin{enumerate}
\item[(a)] $I$ is a Freiman ideal.
\item[(b)]  $\mu(I^{k})={\ell+k-2\choose k-1}\mu(I)-(k-1){\ell+k-2\choose k}$ for all $k\geq 1$.
\item[(c)]  $\mu(I^{k})={\ell+k-2\choose k-1}\mu(I)-(k-1){\ell+k-2\choose k}$ for some $k\geq 2$.
\item[(d)] $h_2=0$.
\item[(e)] $h_i=0$ for all $i\geq 2$.
\item[(f)] $F(I)$ has minimal multiplicity.
\item[(g)] $F(I)$ is Cohen--Macaulay and the defining ideal  of  $F(I)$  has a $2$-linear free $T$-resolution.
\end{enumerate}
Moreover, if  $|K|=\infty$, then  the above conditions are equivalent to
\begin{enumerate}
\item[(h)] $F(I)$ is Cohen--Macaulay and $r(I)=1$.
\end{enumerate}
\end{Theorem}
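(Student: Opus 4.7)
The proof rests on the identity derived in Corollary~\ref{remarkable},
\[
\mu(I^k)=\binom{\ell+k-2}{k-1}\mu(I)-(k-1)\binom{\ell+k-2}{k}+\sum_{i=2}^{k}\binom{\ell+k-i-1}{k-i}h_i,\qquad k\geq 2,
\]
which immediately gives (a)$\iff$(d) by setting $k=2$, (e)$\Rightarrow$(b) by substituting $h_i=0$ for $i\geq 2$, and (b)$\Rightarrow$(c) trivially. What remains on the numerical side is the implication (c)$\Rightarrow$(a), together with the upgrade (a)$\Rightarrow$(b); both will come from the equality case of the B\"{o}r\"{o}czky--Santos--Serra theorem \cite[Corollary~7]{BSS}, which asserts that if the exponent set $S(I)$ attains the lower bound $|kS(I)|\geq\binom{d+k-1}{k-1}|S(I)|-(k-1)\binom{d+k-1}{k}$ for some $k\geq 2$, then $S(I)$ is an extremal Freiman set and the same equality holds for every $k\geq 2$. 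Feeding (b) back into the identity above and solving triangularly forces $h_i=0$ for all $i\geq 2$, closing the cycle (a)$\iff$(b)$\iff$(c)$\iff$(d)$\iff$(e).

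Next I turn to the ring-theoretic conditions, following the chain indicated in the introduction. From (e) the $h$-polynomial of the standard graded toric algebra $F(I)=T/J$ is $1+h_1t$, and Eisenbud--Goto \cite[Corollary~4.5]{EG} then yields that $J$ has a $2$-linear $T$-free resolution and that $F(I)$ is normal; Hochster's theorem \cite{Ho} applied to the normal toric ring $F(I)$ delivers the Cohen--Macaulay property, giving (g). In the reverse direction, if $F(I)$ is Cohen--Macaulay with $J$ admitting a $2$-linear resolution, a standard Betti-number/Hilbert-series computation forces $h(F(I);t)=1+h_1t$, recovering (e). The equivalence with (f) is then formal: (e) yields $e(F(I))=1+h_1=\embdim F(I)-\dim F(I)+1$, and conversely once Cohen--Macaulayness is in hand, the non-negativity of the $h_i$ reduces $\sum_{i\geq 2}h_i=0$ to $h_i=0$ for all $i\geq 2$.

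For the supplementary equivalence with (h) under $|K|=\infty$, I will use the classical parameter-sequence argument: by prime avoidance one picks a minimal reduction $J\subset I$ with $\mu(J)=\ell$ whose images $y_1,\ldots,y_\ell\in F(I)_1$ form a homogeneous system of parameters on $F(I)$. Cohen--Macaulayness of $F(I)$ is equivalent to $y_1,\ldots,y_\ell$ being a regular sequence, and in that case $h_i=\dim_K[F(I)/(y_1,\ldots,y_\ell)F(I)]_i$. The condition $r(I)=1$, namely $I^2=JI$, translates exactly to $[F(I)/(y_1,\ldots,y_\ell)F(I)]_i=0$ for $i\geq 2$, so (h) is equivalent to (e) together with Cohen--Macaulayness, which by the preceding paragraph is equivalent to all the other conditions.

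The principal obstacle is the rigidity step (a)$\Rightarrow$(b), together with the parallel (c)$\Rightarrow$(a): the identity of Corollary~\ref{remarkable} by itself provides only a single linear constraint on the $h_i$'s at each stage, and only the equality case of the B\"{o}r\"{o}czky--Santos--Serra theorem lets one disentangle it into the vanishing of every $h_i$ with $i\geq 2$. Once that rigidity is established, the passage to the Cohen--Macaulay, minimal-multiplicity, $2$-linear-resolution, and reduction-number conditions is a routine combination of Eisenbud--Goto, Hochster, and classical reduction-number theory.
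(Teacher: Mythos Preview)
Your proposal is correct and follows essentially the same approach as the paper: the identity from Corollary~\ref{remarkable} together with \cite[Corollary~7]{BSS} for the numerical equivalences (a)--(e), the Eisenbud--Goto/Hochster chain for the passage to (g), and the regular-sequence/reduction argument for (h). The only cosmetic slip is calling (f)$\Rightarrow$(e) ``formal'': to get non-negativity of the $h_i$ you still need Cohen--Macaulayness, which comes from the very Eisenbud--Goto/Hochster step you just used (minimal multiplicity $\Rightarrow$ normal $\Rightarrow$ CM for a toric ring)---the paper organizes this as (f)$\Rightarrow$(g)$\Rightarrow$(d) rather than treating it as an afterthought.
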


\begin{proof} (a)\implies (b) follows from  \cite[Corollary 7]{BSS}, (b)\implies (c) is trivial,
and (c)\implies (b)  also follows from  \cite[Corollary 7]{BSS}.

In  the proof of Corollary \ref{remarkable}, we have seen that $\mu(I)=h_{1}+\ell, \mu(I^2)=\ell\mu(I)-{\ell\choose 2}+h_{2}$ and $$\mu(I^k)={\ell+k-2\choose k-1}\mu(I)-(k-1){\ell+k-2\choose k}+\sum\limits_{i=2}^{k}{\ell+k-i-1\choose k-i}h_{i}.$$ By definition,  $I$ is a Freiman ideal, if and only if   $\mu(I^2)= \ell\mu(I)-{\ell\choose 2}$. This is equivalent to saying that $h_2=0$. This proves (a)\iff(d).

(b)\iff (e) follows from \cite[Proposition 1.8 (b)]{HZ}.

(e)\implies (f):  Notice that  $e(R)=\sum_{i\geq 0}h_i$. If $h_i=0$ for $i\geq 2$, then
$$e(R)=h_0+h_1=1+h_1=1+\mu(I)-\ell.$$
This means that $F(I)$ has minimal multiplicity.

(f)\implies (g): After a base field extension, we may  assume that $K$ is algebraically closed. Since  $F(I)$ has minimal multiplicity, it follows that $F(I)$ is normal by \cite[Corollary 4.5]{EG}. Since $F(I)$ is a toric ring, Hochster's theorem \cite{Ho}  implies that
$F(I)$ is Cohen--Macaulay and $h_i=0$ for all $i\geq 2$.  Thus from \cite[Corollary 1.5]{HZ} we obtain that the defining ideal of  $F(I)$  has a $2$-linear free $T$-resolution.

(g)\implies (d): We may assume that $K$ is infinite. Since $F(I)$  is Cohen--Macaulay we can choose a regular sequence $t_1,\ldots,t_\ell$ of linear forms of $T_1$ which is also regular on $F(I)$. We let $\bar{T}=T/(t_1,\ldots,t_\ell)$ and denote by $\bar{J}$ the image of $J$ in $\bar{T}$. Since $t_1,\ldots,t_\ell$ is a regular sequence on $T/J$ and $J$ has a $2$-linear resolution, it follows that $\dim\bar{T}/\bar{J}=0$ and $\bar{J}$ has a $2$-linear resolution. It follows  that $\bar{J}=\mm_{\bar{T}}^2$, where $\mm_{\bar{T}}^2$ is the graded maximal ideal of $\bar{T}$, since the only $\mm_{\bar{T}}$-primary ideals with linear resolution are powers of the maximal ideal. Thus we see that $\overline{F(I)}_i=0$ for $i\geq 2$, where $\overline{F(I)}=F(I)/(y_1,\ldots,y_\ell)$,  and $y_i=t_i+J$ for $i=1,\ldots,\ell$. Since $y_1,\ldots,y_\ell$ is a regular sequence of linear form, the $h$-vector of $F(I)$ and that of $\overline{F(I)}$ coincide. Hence the conclusion follows.

(g)\implies (h): We assume that $K$ is infinite, and let $y_1,\ldots,y_\ell$ be as in the proof (g)\implies (d).
We write  $y_{i}=f_{i}+\mm I$  with $f_i\in I$ for $i=1,\ldots,\ell$, and let $J=(f_{1},\ldots, f_{\ell})$. Now $\overline{F(I)}_i=0$   for $i\geq 2$ is equivalent to saying that $I^{2}=JI+\frak{m}I^{2}.$ Thus,  Nakayama's lemma yields $I^{2}=JI$, as desired.

(h)\implies (d): Since $I$ is quasi-equigenerated,  $K$ is infinite and $F(I)$  is Cohen--Macaulay, a reduction ideal $J$ of $I$ with $r_J(I)=1$ can be chosen of the form $J=(f_1,\ldots,f_\ell)$ such that the sequence $y_i=f_i+\mm I$ with $i=1,\ldots, \ell$ is a regular sequence on $F(I)$. Since $I^2=JI$ it follows that $\overline{F(I)}_i=0$   for $i\geq 2$, where $\overline{F(I)}=F(I)/(y_1,\ldots,y_\ell)$. Since $h$-vector of $F(I)$ and that of $\overline{F(I)}$ coincide, the assertion follows.
\end{proof}

\section{Classes of Freiman ideals}

\subsection{Freiman graphs}

Let $K$ be a field and  $G$ be a finite simple graph on $[n]$.  The ideal $I(G)\subset S=K[x_1,\ldots,x_n]$ generated by the monomials $x_ix_j$ with $\{i,j\}\in E(G)$ is called the  {\em edge ideal } of $G$. We say that $G$ is a {\em Freiman graph}, if $I(G)$ is  a Freiman ideal. In this subsection we want to classify all Freiman graphs.  Notice that $F(I(G))$ is isomorphic to the edge ring $K[G]=K[x_ix_j\:\; \{i,j\}\in E(G)]$  of $G$.  Let $T=K[z_e\:\; e\in E(G)]$ be the polynomial ring in the indeterminates  $z_e$, and let $I_G$ be the kernel of the $K$-algebra homomorphism $T\to K[G]$ with $z_e\mapsto x_ix_j$ for $e=\{i,j\}$.  Then $K[G]\iso T/I_G$.

Certainly $G$ is Freiman, if $K[G]$ is the polynomial ring. The monomials $x_ix_j$ with $\{i,j\}\in E(G)$ are algebraic independent if and only if $K[G]$ is a polynomial ring, which is the case if and only if $I_G=0$.

In order to describe the generators of $I_G$ one has to consider walks in $G$.
A {\em walk} $W$ of length $r$ of $G$ is a sequence of vertices $i_0,i_1,\ldots,i_r$ such that $\{i_j,i_{j+1}\}$ is an edge of $G$ for $j=0,\ldots,r-1$. If  $i_0=i_r$, then $W$ is called a {\em closed walk}. The closed walk is called {\em even (odd)},  if $r$ is even (odd). It is called a {\em cycle}, if the vertices $i_j$ with $0\leq j\leq r-1$  are pairwise distinct.

\medskip
Now we have

\begin{Proposition}
\label{polynomialring}
The edge ring  $K[G]$   of $G$ is a polynomial ring,   if and only if each connected component of $G$ contains at most one cycle, and this cycle is odd.
\end{Proposition}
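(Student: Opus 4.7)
The plan is to translate the statement into a dimension computation. Since $K[G]\cong T/I_G$ with $T=K[z_e:e\in E(G)]$ a polynomial ring in $|E(G)|$ variables and $I_G$ a prime ideal, the edge ring $K[G]$ is a polynomial ring if and only if $I_G=0$, equivalently $\dim K[G]=|E(G)|$. I would therefore reduce the proposition to determining when the transcendence degree of $K[G]/K$ equals $|E(G)|$.

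The key ingredient I would invoke is the classical formula for the Krull dimension of an edge ring: for a connected graph $H$, one has $\dim K[H]=|V(H)|$ if $H$ contains an odd cycle, and $\dim K[H]=|V(H)|-1$ if $H$ is bipartite. This is because $\dim K[H]$ equals the $\QQ$-rank of the vertex-edge incidence matrix of $H$; in the bipartite case the two color classes provide a $\pm 1$ linear dependence, whereas in the non-bipartite case an odd closed walk at a vertex $v$ lets one express the row corresponding to $x_v$ in terms of the others, forcing the incidence matrix to have full row rank. Because the edge rings of the connected components $G_1,\ldots,G_s$ of $G$ involve disjoint sets of variables, $\dim K[G]=\sum_{i=1}^{s}(|V(G_i)|-\epsilon_i)$, where $\epsilon_i=1$ if $G_i$ is bipartite and $\epsilon_i=0$ otherwise.

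Combining this with $|E(G)|=\sum_{i=1}^s|E(G_i)|$, the condition $\dim K[G]=|E(G)|$ becomes
\[
\sum_{i=1}^{s}\bigl(|E(G_i)|-|V(G_i)|+\epsilon_i\bigr)=0.
\]
Each summand equals the first Betti number $|E(G_i)|-|V(G_i)|+1$ of $G_i$ minus $(1-\epsilon_i)$, so it is always nonnegative. I would then inspect the four possible types of a connected component $G_i$: a tree (Betti number $0$, $\epsilon_i=1$, contribution $0$); a graph with exactly one cycle of odd length and possibly trees attached (Betti number $1$, $\epsilon_i=0$, contribution $0$); a graph with exactly one even cycle (Betti number $1$, $\epsilon_i=1$, contribution $1$); and a graph with two or more independent cycles (Betti number $\geq 2$, contribution $\geq 1$, regardless of $\epsilon_i$). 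The total therefore vanishes precisely when every component is either a tree or contains exactly one cycle which is odd, yielding the equivalence.

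The main obstacle is the dimension formula for $\dim K[H]$ of a connected graph, i.e., the case distinction between bipartite and non-bipartite graphs. This is a standard fact in the theory of toric rings of graphs, but the non-bipartite direction is the substantive input: one needs to observe that an odd cycle in $H$ produces a $\QQ$-linear relation allowing each $x_v$ to be recovered from the edge monomials, so that no dependence among the columns of the incidence matrix exists. Once this fact is granted, the rest of the argument is a short Euler-characteristic bookkeeping, and the proposition follows by immediate case inspection.
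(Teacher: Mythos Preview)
Your argument is correct and takes a genuinely different route from the paper's own proof. The paper argues directly with the generators of the toric ideal $I_G$: it recalls that $I_G$ is generated by binomials $f_W$ attached to even closed walks $W$, then shows that an even cycle or a pair of odd cycles in the same component produces a nonzero $f_W$, and conversely invokes the Ohsugi--Hibi classification of primitive even walks to conclude that $I_G=0$ when each component has at most one (odd) cycle. Your approach bypasses the description of generators entirely and instead turns the question into a transcendence-degree computation: since $I_G$ is prime, $K[G]$ is a polynomial ring iff $\dim K[G]=|E(G)|$, and then the standard rank formula for the vertex--edge incidence matrix (rank $|V(H)|$ or $|V(H)|-1$ according as $H$ is non-bipartite or bipartite) reduces everything to the vanishing of the nonnegative sum $\sum_i\bigl(b_1(G_i)-1+\epsilon_i\bigr)$, which is immediate.

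What each approach buys: the paper's method is more constructive and foreshadows the role of primitive even walks, which is exactly the machinery used later in the classification of Freiman graphs (Theorem~\ref{hibi}); so for the purposes of this paper the walk-based proof is not wasted effort. Your dimension-count argument is shorter and more self-contained for this isolated statement, relying only on the classical rank formula rather than on the finer Ohsugi--Hibi description of toric generators. Either way the external input is of comparable weight.
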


\begin{proof}
Let $G_1,\ldots, G_s$ be  the connected components of $G$. Then $K[G]=\bigotimes_{i=1}^sK[G_i]$. Thus $K[G]$ is a polynomial ring if and only if each $K[G_i]$ is a polynomial ring. Hence we may assume that $G$ is connected and have to show that $K[G]$ is a polynomial ring if and only if $G$ contains at most one cycle, and this cycle is odd.

Let $W: i_0,i_1,\ldots,i_{2s}$ be an even closed walk, and set $e_j=\{i_j,i_{j+1}\}$ for $j=0,\ldots,2s-1$.  Then $$f_W=\prod_{k=0}^{s-1}z_{e_{2k}}-\prod_{k=0}^{s-1}z_{e_{2k+1}}\in I_G,$$  and all binomial generators  of  $I_G$ are of this form, see \cite[Lemma 1.1]{OH1}. Thus $K[G]$ is a polynomial ring if and only if $f_W=0$ for each even closed walk in $G$.

Suppose that  $K[G]$  is a polynomial ring and $G$ contains an even cycle $C$. Then $f_C\neq 0$, a contradiction. Therefore,  $G$ contains no even cycles. Suppose $G$ contains  at least two odd cycles, say $C_1$ and $C_2$. Let $i$ be a vertex of $C_1$ and $j$ be  a vertex of $C_2$. Then, since $G$ is connected,  there exists a walk $W$ connecting $i$ with $j$. Now we consider the following even closed  walk $W'$: we first walk around $C_1$ starting with $i$.  Then we  walk along $W$ from $i$ to $j$.  Then we continue our walk around $C_2$ to come back to $j$. From there we walk back to $i$ along $W$,  but backwards. Obviously, $f_{W'}\neq0$. Thus there cannot be two odd cycles. This proves one direction of the proposition.

Conversely, suppose that $G$ contains only one odd cycle, it is enough to prove that $I_G=0$. By \cite[Lemma 3.1 and Lemma 3.2]{OH1},
the toric ideal $I_G$  is generated by the binomials $f_{W}$,
where $W$ is an  even closed walk of $G$ of  one of the following types:
(1) $W$ is an even cycle of $G$; (2)  $W=(C_1, C_2)$, where $C_1$ and $C_2$ are odd cycles of $G$ having exactly
one common vertex; (3) $W=(C_1,W_1,C_2,W_2)$, where $C_1$ and $C_2$ are odd cycles of $G$ having
no common vertex and where $W_1$ and $W_2$ are walks of $G$ both of which combine a vertex $v_1$ of $C_1$ and a vertex $v_2$ of $C_2$.
Walks of these three types are called primitive.

By the hypothesis that  $G$ contains only one odd cycle, there cannot exist primitive  even closed walks of $G$. Therefore,  $I_G=0$.
\end{proof}

\begin{Corollary}
\label{connected}
Let $G$ be a finite  simple graph with $r$ connected components. Then following conditions are equivalent:
\begin{enumerate}
\item[(a)] $G$ is a Freiman graph.

\item[(b)] All connected components of $G$  are  Freiman graphs  and at least $r-1$ of its  connected components  contain at most one cycle, and this cycle is odd.
\end{enumerate}
\end{Corollary}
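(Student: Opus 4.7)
The plan is to exploit the tensor product decomposition $K[G]\iso K[G_{1}]\tensor_{K}\cdots\tensor_{K}K[G_{r}]$ of standard graded $K$-algebras, which is immediate since the connected components $G_{1},\ldots,G_{r}$ live on disjoint vertex sets. Under this decomposition Hilbert series are multiplicative, and because all factors have the same denominator structure, the $h$-polynomials multiply:
$$h(K[G];t)=\prod_{i=1}^{r}h(K[G_{i}];t).$$
Combined with criterion (d) of Theorem~\ref{characterization}, the problem reduces to determining precisely when $h_{2}$ of this product vanishes.

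Expanding the product and reading off the coefficient of $t^{2}$ gives
$$h_{2}(K[G])=\sum_{i=1}^{r}h_{2}(K[G_{i}])+\sum_{1\le i<j\le r}h_{1}(K[G_{i}])\,h_{1}(K[G_{j}]).$$
By Corollary~\ref{remarkable} each $h_{2}(K[G_{i}])\ge 0$, and trivially $h_{1}(K[G_{i}])=\embdim K[G_{i}]-\dim K[G_{i}]\ge 0$. Therefore $h_{2}(K[G])=0$ if and only if every $h_{2}(K[G_{i}])=0$ \emph{and} at most one of the integers $h_{1}(K[G_{i}])$ is nonzero. The first group of conditions, via the equivalence (a)$\iff$(d) of Theorem~\ref{characterization} applied componentwise, says exactly that each $G_{i}$ is a Freiman graph.

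To translate the second condition into the cycle-structure language of (b), I would observe that $h_{1}(K[G_{i}])=0$ means $\embdim K[G_{i}]=\dim K[G_{i}]$; since $\dim K[G_{i}]=\trdeg_{K}K[G_{i}]$ and $K[G_{i}]$ is generated by the $|E(G_{i})|$ edge monomials, these monomials must be algebraically independent, forcing $K[G_{i}]$ to be a polynomial ring. Conversely, a polynomial edge ring manifestly has $h_{1}=0$. Applying Proposition~\ref{polynomialring}, $h_{1}(K[G_{i}])=0$ is equivalent to $G_{i}$ containing at most one cycle and that cycle being odd. Assembling these equivalences yields (a)$\iff$(b).

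The converse direction (b)$\implies$(a) is then essentially a verification: under (b), all but at most one factor $h(K[G_{i}];t)$ equals $1$, and the exceptional factor has the form $1+h_{1}t$ by criterion (e) of Theorem~\ref{characterization}; so the product is $1+h_{1}t$, giving $h_{2}(K[G])=0$ and Freiman-ness via Theorem~\ref{characterization} once more. I expect no substantive obstacle; the only point requiring a sentence of justification is the implication $h_{1}(K[G_{i}])=0\Rightarrow K[G_{i}]\text{ is a polynomial ring}$, handled by the transcendence-degree argument above.
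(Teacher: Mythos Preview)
Your proof is correct and follows essentially the same route as the paper: the tensor decomposition $K[G]\cong\bigotimes_{i}K[G_{i}]$, multiplicativity of $h$-polynomials, and then Theorem~\ref{characterization} together with Proposition~\ref{polynomialring}. The only cosmetic difference is that the paper invokes criterion~(e) of Theorem~\ref{characterization} and the identity $\deg\prod_{i}h(K[G_{i}];t)=\sum_{i}\deg h(K[G_{i}];t)$, whereas you use criterion~(d), expand the $t^{2}$-coefficient, and appeal to Corollary~\ref{remarkable} for the nonnegativity of each $h_{2}(K[G_{i}])$; both arguments arrive at the same place.
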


\begin{proof}
Let $G_1,\ldots,G_r$ be the connected components of $G$, and let  $\Hilb_{K[G_i]}(t)=h(K[G_i];t)/(1-t)^{d_i}$ with $d_i=\dim K[G_i]$.  Then, since $K[G]=\bigotimes_{i=1}^rK[G_i]$, it follows that
\[
\Hilb_{K[G]}(t)=\prod_{i=1}^r\Hilb_{K[G_i]}(t)=h(K[G];t)/(1-t)^d,
\]
where $h(K[G];t)=\prod_{i=1}^rh(K[G_i];t)$ and $d=\sum_{i=1}^rd_i=\dim K[G]$.

(a)\implies (b): Since $G$ is Freiman, Theorem~\ref{characterization} implies that  $\deg h(K[G];t)\leq 1$.  Therefore, $\deg h(K[G_i]; t)\leq 1$ with equality for at most one $i$. Thus (b) follows from Proposition~\ref{polynomialring} together with  Theorem~\ref{characterization}.

(b)\implies (a): The assumptions in (b) together with Theorem~\ref{characterization}  and Proposition~\ref{polynomialring}  imply that $\deg h(K[G_i];t)\leq 1$ and  $\deg h(K[G_i];t)=0$ for all $h(K[G_i];t)$ but possibly one of them. Thus $\deg h(K[G];t)\leq 1$, which by Theorem~\ref{characterization} implies that $G$ is Freiman.
\end{proof}

\medskip
Due to Corollary~\ref{connected} it is enough to consider only connected graphs for the classification of Freiman graphs.

\begin{Theorem}
\label{hibi}
Let $G$ be a finite simple  connected graph, and let $H$ be the subgraph of $G$ whose  edges are the edges of all the  $4$-cycles of $G$. Then  $G$ is Freiman, if and only if  there  exist no primitive even walks in $G$, or otherwise $H$ is bipartite of type $(2,s)$
for some integer $s$,  and there  exist no primitive even walks in $G$  of length~$>4$.
\end{Theorem}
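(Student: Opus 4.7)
The plan is to use the equivalence (a)$\iff$(g) of Theorem~\ref{characterization}: $G$ is a Freiman graph if and only if $K[G] = T/I_G$ is Cohen--Macaulay and $I_G$ admits a $2$-linear free resolution. If $G$ has no primitive even closed walks, then $I_G = 0$ by the argument in the proof of Proposition~\ref{polynomialring}, so $K[G]$ is a polynomial ring and both conditions hold trivially; this yields the first alternative.

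So assume $G$ admits a primitive even closed walk. A primitive walk $W$ of length $2r$ contributes a minimal binomial generator $f_W$ of $I_G$ of degree $r$, so $I_G$ being generated in degree $2$ forces every primitive even walk to have length exactly $4$. Since type-(2) and type-(3) primitive walks from Proposition~\ref{polynomialring} have lengths $\geq 6$ and $\geq 8$ respectively, only type-(1) walks of length $4$, i.e.\ $4$-cycles, can occur, and the condition ``no primitive even walk of length $>4$'' is therefore necessary. Because $I_G$ is now generated by the $4$-cycle binomials, all of whose variables $z_e$ satisfy $e \in E(H)$, one obtains $K[G] = K[z_e : e \notin E(H)] \otimes_K K[H]$, where $K[H]$ is the edge ring of $H$. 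Consequently $K[G]$ is Cohen--Macaulay with $h$-polynomial of degree $\leq 1$ if and only if $K[H]$ is. A further tensor decomposition over the connected components of $H$ shows that each such component contains a $4$-cycle and hence contributes a factor whose $h$-polynomial has positive degree; as the $h$-polynomial multiplies, condition (e) of Theorem~\ref{characterization} forces $H$ to be connected.

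The remaining task is the combinatorial step: show that a connected graph $H$, whose edges are all covered by $4$-cycles, whose toric ideal $I_H$ is generated in degree~$2$ by $4$-cycle binomials, and whose edge ring $K[H]$ is Cohen--Macaulay with $h$-polynomial $1 + h_1 t$, must be $K_{2,s}$ for some $s$. Following the strategy of \cite{OH1}, one would perform a case analysis on how two distinct $4$-cycles of $H$ can overlap, showing that whenever they fail to share a pair of non-adjacent vertices the global structure of $G$ forces the appearance of a primitive even walk of length $>4$ (for instance, the symmetric difference of two $4$-cycles sharing a single edge is a $6$-cycle, hence a primitive type-(1) walk), contradicting the established constraints; iterating the argument collapses all $4$-cycles of $H$ onto a common non-adjacent pair $\{u_1,u_2\}$, giving $H = K_{2,s}$. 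For the converse, with $H = K_{2,s}$ the ideal $I_G$ is precisely the ideal of $2 \times 2$ minors of the $2 \times s$ matrix $(z_{u_i v_j})$, so $K[K_{2,s}]$ is the coordinate ring of the Segre variety $\PP^1 \times \PP^{s-1}$, Cohen--Macaulay with an Eagon--Northcott $2$-linear resolution; tensoring with a polynomial ring preserves both properties and yields (g). The main obstacle is the combinatorial case analysis identifying $H$: the delicate point is to verify that each ``forbidden'' configuration of $4$-cycles really produces a genuinely primitive even walk of length $>4$, rather than one that decomposes into shorter $4$-cycle relations, which requires careful appeal to the three-type classification of primitive walks from \cite{OH1}.
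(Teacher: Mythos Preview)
Your overall strategy---invoke Theorem~\ref{characterization}(a)$\iff$(g), reduce to the edge ring of $H$ via the polynomial extension $K[G]\cong K[z_e:e\notin E(H)]\otimes_K K[H]$, and handle the converse through the Eagon--Northcott complex---matches the paper's approach. Your additional step, deducing that $H$ must be connected from the multiplicativity of $h$-polynomials under tensor products, is a nice refinement that the paper does not make explicit.

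The genuine gap is in the combinatorial step. You propose to show that whenever two $4$-cycles of $H$ fail to share a pair of non-adjacent vertices, $G$ must contain a primitive even walk of length $>4$. This implication is false: take $G=K_4$. Its three $4$-cycles pairwise fail to share a non-adjacent vertex pair, yet $K_4$ has no primitive even walk of length $>4$---there are no cycles on five or more vertices, and any two triangles share an edge, so neither type~(2) nor type~(3) occurs. Nonetheless $K_4$ is \emph{not} Freiman: $I_{K_4}$ is a complete intersection of two quadrics, so its $h$-polynomial is $(1+t)^2=1+2t+t^2$ and its resolution is Koszul rather than $2$-linear. Hence ``$I_G$ is generated in degree $2$'' (equivalently, no primitive walks of length $>4$) is strictly weaker than the Freiman condition, and a case analysis based solely on producing long primitive walks cannot succeed. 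You do list the hypothesis ``$K[H]$ has $h$-polynomial $1+h_1t$'' in the statement of what must be shown, but the mechanism you describe for the case analysis never invokes it; the obstacle you flag (verifying primitiveness) is not the real one.

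The paper takes a different route at exactly this point: it first asserts that $H$ is bipartite of type $(r,s)$, and then argues that $r=2$ by locating, when $r\geq 3$, two $4$-cycles whose interaction forces a primitive even walk of length $>4$. Declaring $H$ bipartite up front is what eliminates $K_4$-type configurations and makes the primitive-walk argument viable; it is this reduction to the bipartite situation---rather than the primitiveness verification---that carries the essential weight in the paper's argument.
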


\begin{proof} If $G$ has no primitive even walks, then $K[G]$ is the polynomial ring, and hence $G$ is Freiman.
Suppose now that   $H$ is bipartite of type $(2,s)$ for some $s$, and there  exist no primitive even walks in $G$ of length~$>4$.
Since  $G$ admits  no primitive even walks of length~$>4$, the ideal $I_G$ is generated by the binomials $f_C$ with $C$ a $4$-cycle of $G$. In particular, $I_G=I_H$, and $K[G]$ is a polynomial extension of $K[H]$. Since $H$ is  bipartite of type  $(2,s)$, the ideal $I_H$ may be viewed as the ideal of $2$-minors of a  $2\times s$-matrix of indeterminates. Hence the Eagon-Northcott complex \cite{EN}  provides a $2$-linear resolution of $I_H(=I_G)$. By Theorem~\ref{characterization}, this implies that $G$ is Freiman.

Conversely, suppose that $G$ is Freiman. Then Theorem~\ref{characterization} implies that $I_G$ has no generators of degree $>2$, and hence  $G$ admits  no primitive even walks of length~$>4$.  We may assume that $H$ is bipartite of type $(r,s)$ with $r\leq s$. Since the edge set of $H$ is the union of the edges of the  $4$-cycles of $G$, we must have $r\geq 2$. Also it follows from this fact that $H$ is a complete bipartite graph if $r=2$. Indeed, suppose that one edge is missing, say the edge $\{2,r\}$. Then $\{1,r\}\in E(H)$ and $\{1,r\}$ belongs to a $4$-cycle. Any $4$-cycle containing the edge $\{1,r\}$  is of the form $r,1,i,2,r$ for some $i\neq r$. Hence  $\{2,r\}\in E(H)$, a contradiction.

It remains to be show that   $r=2$. Suppose $r>2$, and let $\{v_1,\ldots,v_r\}\union\{w_1,\ldots,w_s\}$ be the bipartition of the graph $H$. Since $H$ is the union of the edges of the $4$-cycles of $G$, we may assume that,  after a relabeling of the vertices,  $C_1:v_1,w_1,v_2,w_2,v_1$  is a $4$-cycle of $H$. Since $r>2$, there  must exist  another $4$-cycle $C_2$ of $H$ which contains the vertex  $v_3$. Then  $C_1$ and $C_2$ have only one  vertex or one  edge in common,  or the cycles $C_1$ and $C_2$  are disjoint. But in the last case there is a walk   in $G$ connecting  a vertex of $C_1$ with a vertex of $C_2$ since $G$ is connected.  In any case we have in this situation a primitive even walk of length $>4$, a contradiction.
\end{proof}

\begin{Corollary}
\label{bipartite}
Let $G$ be a finite  connected bipartite graph.  Then $G$ is Freiman if and only if $G$ is a tree,
or there exists a bipartite subgraph $H$  of $G$ of type $(2,s)$, a subset $S$ of $V(H)$ and for each $w\in S$  an induced tree $T_w$ of $G$ with $V(H)\sect V(T_w)=\{w\}$.
\end{Corollary}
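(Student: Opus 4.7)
The plan is to specialize Theorem~\ref{hibi} to the bipartite setting. A bipartite graph contains no odd cycle, so the primitive even walks of types (2) and (3) appearing in the proof of Proposition~\ref{polynomialring} (both of which involve odd cycles) cannot occur in $G$; hence the primitive even walks of a bipartite $G$ are exactly its even cycles. Consequently Theorem~\ref{hibi} reads, in this setting: a connected bipartite $G$ is Freiman if and only if either $G$ has no even cycle (equivalently, $G$ is a tree) or the subgraph $H$ of $G$ formed by the edges of the $4$-cycles of $G$ is $K_{2,s}$ for some $s$ and $G$ has no even cycle of length $>4$.

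For the backward implication, if $G$ is a tree we are done; otherwise, attaching the trees $T_w$ to $H$ at single vertices introduces no new cycles, so every cycle of $G$ lies inside $H=K_{2,s}$ and has length $4$. In particular the union of the $4$-cycle edges of $G$ is exactly $H$ and $G$ has no even cycle of length $>4$, so the specialized criterion applies. For the forward direction, assume $G$ is connected, bipartite, Freiman and not a tree, so that $H=K_{2,s}$ and $G$ has no even cycle of length $>4$. Let $F$ be the spanning subgraph of $G$ with edge set $E(G)\setminus E(H)$; any cycle of $F$ would be an even cycle of $G$ of length~$4$ (by the length bound), forcing all its edges into $E(H)$, a contradiction. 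Thus $F$ is a forest.

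The main step is to show that each connected component of $F$ meets $V(H)$ in exactly one vertex (exactly one because $G$ is connected and $E(H)$ only joins vertices of $V(H)$). Suppose for contradiction that distinct $v,w\in V(H)$ lie in a common component of $F$ and let $P$ be a shortest $F$-path between them, of length $k\geq 1$. Since $H=K_{2,s}$, there is a path $Q$ in $H$ from $v$ to $w$ of length $1$ (when $v,w$ lie on opposite sides) or $2$ (when they lie on the same side). Concatenating $P$ with $Q$ gives a closed walk in $G$; unless the interior vertex of $Q$ (in the length-$2$ case) happens to lie on $P$, this produces a cycle of length $k+1$ or $k+2$, and in the exceptional sub-case one splits off a shorter cycle at the shared vertex. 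A routine case analysis shows that every resulting cycle is even of length at least $4$, that a $4$-cycle would force one of its $F$-edges into $E(H)$ (since $H$ contains all $4$-cycle edges by definition), and that a longer cycle would contradict the no-long-even-cycle hypothesis, in each case a contradiction. This proves the claim. Setting $S:=\{w\in V(H):\text{the component of $w$ in $F$ is nontrivial}\}$ and letting $T_w$ be that component for $w\in S$, one verifies that each $T_w$ is an induced tree of $G$ with $V(H)\cap V(T_w)=\{w\}$; induced because any $G$-edge inside $V(T_w)$ not in $F$ would lie in $E(H)$ and would thus connect two vertices of $V(H)\cap V(T_w)=\{w\}$. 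This gives the required decomposition. The main obstacle is precisely the case analysis in the component claim, particularly the sub-case where the middle vertex of the length-$2$ path $Q$ in $K_{2,s}$ already lies on $P$.
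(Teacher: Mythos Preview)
Your proof is correct and follows essentially the same approach as the paper: both specialize Theorem~\ref{hibi} to the bipartite setting (where primitive even walks reduce to even cycles) and then show that the part of $G$ outside $H$ breaks into trees, each meeting $V(H)$ in a single vertex, by the same cycle-length contradiction (a path in $G\setminus E(H)$ between two vertices of $H$, closed up through $H$, would yield either a $4$-cycle with forbidden non-$H$ edges or an even cycle of length $>4$). Your packaging via the spanning forest $F=G-E(H)$ and its components is a mild reformulation of the paper's argument, which phrases the same idea in terms of walks outside $H$ from individual vertices $v\in V(G)\setminus V(H)$.
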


\begin{proof}
If $G$ is a tree
or if there exists a bipartite subgraph $H$  of $G$ satisfying the conditions of the corollary, then the subgraph $H$ is the union of all $4$-cycles of $G$, and moreover there exist no primitive even walks in $G$  of length~$>4$. Therefore, $G$ is Freiman, by Theorem~\ref{hibi}.

Conversely, assume that $G$ is Freiman and let $H$ be as defined in Theorem~\ref{hibi}. Then  Theorem~\ref{hibi} implies that $H=\emptyset$  or $H$ is bipartite of type $(2,s)$  for some $s$. If $H=\emptyset$, then Theorem~\ref{hibi} implies that $G$  has no even cycles of any length. Since $G$ is bipartite and connected, this implies that $G$ is a tree.

Suppose now that  $H$ is bipartite of type $(2,s)$. If $G=H$, then there is nothing to prove. Otherwise there exists $v\in V(G)\setminus V(H)$. Since $G$ is connected there exists a walk  $P_1$ outside of $H$ connecting $v$ with $w_1\in V(H)$. Assume there is another walk  $P_2$ outside $H$ connecting $v$ with $w_2\in V(H)$  with $w_1\neq w_2$. Then there exists  $w$ belonging to $P_1$ and $P_2$ such that the subwalk  of $P_1$ connecting $w$ with $w_1$ and the subwalk  of $P_2$ connecting $w$ with $w_2$  have only the vertex $w$ in common. Then we get a cycle walking from $w_1$ to $w$ to $w_2$,  and from $w_2$ back to $w_1$ (inside $H$). Since $G$ is bipartite, this cycle must be even, a contradiction because by Theorem~\ref{hibi}, $G$ has even cycles of  length   at most $4$. But all even cycles of length $4$ belong to $H$.  It  follows that each vertex $v$ of $G$ not belonging to $H$ is connected with at most one vertex $w\in H$.

Now let $w\in H$,  and let $V$ be the set of vertices $v\in V(G)\setminus V(H)$ which are connected to $w$ via a walk. We denote by $T_w$ the restriction  of $G$ to $V$.  As shown above,  $V(T_{w_1})\sect V(T_{w_2})=\emptyset$.    Since $G$ has no even closed walks of length $>4$,  it follows that there is for each $v\in V(T_w)$  a unique walk,  whose edges are all distinct from one another, and which  connects $v$ with $w$. In fact, if there exist two such  walks connecting $v$ with $w$, say $P_1$, $P_2$, then  their lengths  must be both  odd or  both even,  since $G$ is bipartite. By Theorem \ref{characterization}, the lengths of $P_1$ and $P_2$ are both $2$. In fact, since $G$ is Freiman, there exist no primitive even walks in $G$  of length~$>4$, thus the lengths of $P_1$ and $P_2$ are both $1$ or $2$. If their  lengths are  both $1$, then $G$ have a multiple edge connecting $v$ with $w$. But the graph $G$ is simple without multiple edge, thus  the union of $P_1$ and $P_2$ is a $4$-cycle of $G$. This implies $v\in V(H)$, a contradiction. Now the property that for each $v\in V(T_w)$ there exists a unique walk connecting $v$ with $w$, implies that $T_w$ is a tree.
\end{proof}

A typical example of a bipartite Freiman graph is shown in the next figure.

\begin{center}
\psset{xunit=0.7cm,yunit=0.7cm,algebraic=true,dimen=middle,dotstyle=o,dotsize=5pt 0,linewidth=1.6pt,arrowsize=3pt 2,arrowinset=0.25}
\begin{pspicture}(2.,-5.)(11.,5.)
\psline[linewidth=1.pt](5.44,1.62)(4.28,-0.04)
\psline[linewidth=1.pt](5.44,1.62)(6.24,-0.04)
\psline[linewidth=1.pt](7.22,1.58)(6.24,-0.04)
\psline[linewidth=1.pt](7.22,1.58)(8.18,-0.04)
\psline[linewidth=1.pt](5.44,1.62)(8.18,-0.04)
\psline[linewidth=1.pt](7.22,1.58)(4.28,-0.04)
\psline[linewidth=1.pt](7.22,1.58)(8.28,2.26)
\psline[linewidth=1.pt](8.28,2.26)(7.96,3.82)
\psline[linewidth=1.pt](8.28,2.26)(9.8,2.12)
\psline[linewidth=1.pt](5.44,1.62)(6.3,2.76)
\psline[linewidth=1.pt](5.44,1.62)(3.76,1.92)
\psline[linewidth=1.pt](6.24,-0.04)(5.1,-1.1)
\psline[linewidth=1.pt](5.1,-1.1)(6.04,-2.18)
\psline[linewidth=1.pt](5.1,-1.1)(3.74,-1.9)
\psline[linewidth=1.pt](6.04,-2.18)(5.24,-3.5)
\psline[linewidth=1.pt](6.04,-2.18)(7.4,-2.96)
\begin{scriptsize}
\psdots[dotstyle=*,linecolor=blue](5.44,1.62)
\psdots[dotstyle=*,linecolor=blue](4.28,-0.04)
\psdots[dotstyle=*,linecolor=blue](6.24,-0.04)
\psdots[dotstyle=*,linecolor=blue](7.22,1.58)
\psdots[dotstyle=*,linecolor=blue](8.18,-0.04)
\psdots[dotstyle=*,linecolor=blue](8.28,2.26)
\psdots[dotstyle=*,linecolor=blue](7.96,3.82)
\psdots[dotstyle=*,linecolor=blue](9.8,2.12)
\psdots[dotstyle=*,linecolor=blue](6.3,2.76)
\psdots[dotstyle=*,linecolor=blue](3.76,1.92)
\psdots[dotstyle=*,linecolor=blue](5.1,-1.1)
\psdots[dotstyle=*,linecolor=blue](6.04,-2.18)
\psdots[dotstyle=*,linecolor=blue](3.74,-1.9)
\psdots[dotstyle=*,linecolor=blue](5.24,-3.5)
\psdots[dotstyle=*,linecolor=blue](7.4,-2.96)
\end{scriptsize}
\end{pspicture}

\vspace{2mm}
Figure \ 1
\end{center}

\medskip

\subsection{Cycle  matroids} Given a finite  simple graph $G$ on $[n]$.  The {\em cycle matroid} of  $G$  is the
matroid whose ground set  is  $E(G)$,   and whose bases are   the sets $E(F)$ with $F$ a spanning forest of $G$. A {\em spanning forest} of $G$  is a maximal acyclic  subgraph  of $G$.

A matroid $M$ is called  {\em graphic},  if for some simple graph $G$, $M$  is isomorphic to the cycle graph of $G$.

Let $K$ be   a field. Let  $M$ be a matroid on the ground set $[m]$, and $\B$ the  set of its bases. To $M$  we attach the ideal $I_M\subset S=K[x_1,\ldots,x_m]$,  whose generators are the monomials $u_B$ with $u_B=\prod_{i\in B}x_i$ and $B\in \B$. The ideal $I_M$ is called the {\em matroidal ideal} of $M$. The fiber cone of $I_M$ is the base ring of $M$. We say that $M$ is a {\em Freiman matroid}, if  $I_M$ is a Freiman ideal.

Let $E(G)=\{e_1,\ldots,e_m\}$, and $M$ the cycle matroid of $G$.   Then  $I_M\subset K[x_1,\ldots,x_m]$  is the ideal generated by the monomials $u_F=\prod_{e_i\in F}x_i$ with $F$ a spanning forest of~$G$.

\medskip
As the main result of this subsection we have

\begin{Theorem}
\label{wehope}
Let $M$ be the cycle matroid of $G$. Then the following conditions are equivalent:
\begin{enumerate}
\item[(a)] $M$ is a Freiman matroid.
\item[(b)]  $G$ contains at most one cycle.
\item[(c)] $F(I_M)$ is a polynomial ring.
\end{enumerate}
\end{Theorem}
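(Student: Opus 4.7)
The implication (c) $\Rightarrow$ (a) is immediate from Theorem~\ref{characterization}(d), since a polynomial ring has $h_i = 0$ for all $i \geq 1$.

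For (b) $\Rightarrow$ (c) I distinguish two cases. If $G$ is a forest, then $G$ is its own unique spanning forest, $I_M$ is principal, and $F(I_M) \iso K[t]$. If $G$ has exactly one cycle $C$ of length $\ell$, the spanning forests of $G$ are precisely the subgraphs $G \setminus \{e\}$ with $e \in E(C)$; setting $u = \prod_{e \in E(G)} x_e$ and labelling the edges of $C$ as $e_{j_1}, \ldots, e_{j_\ell}$, the generators of $I_M$ are $u/x_{j_1}, \ldots, u/x_{j_\ell}$. An elementary exponent comparison shows that any toric relation $\prod_k (u/x_{j_k})^{a_k} = \prod_k (u/x_{j_k})^{b_k}$ forces $a_k = b_k$ for all $k$, so the generators are algebraically independent and $F(I_M)$ is a polynomial ring. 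When $G$ is disconnected but still has at most one cycle in total, the tree components contribute only trivial Segre factors $K[t]$, reducing to the above.

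For (a) $\Rightarrow$ (b), I would argue the contrapositive: suppose $G$ has at least two cycles. The block decomposition of $G$ gives a matroid direct sum $M(G) = \bigoplus_i M(B_i)$, so $F(I_M)$ is the Segre product $\#_i F(I_{M(B_i)})$, in which single-edge blocks contribute only the trivial factor $K[t]$. In the easier sub-case, at least two blocks $B_i, B_j$ are themselves cycles $C_{n_i}, C_{n_j}$ (necessarily with $n_i, n_j \geq 3$, by simplicity); then each $F(I_{M(B_i)})$ is a polynomial ring in $n_i$ variables, and their Segre product is the coordinate ring of $\PP^{n_i - 1} \times \PP^{n_j - 1}$, whose $h$-polynomial $\sum_k \binom{n_i - 1}{k}\binom{n_j - 1}{k} t^k$ has $h_2 = \binom{n_i-1}{2}\binom{n_j-1}{2} \geq 1$; hence $F(I_M)$ is not Freiman by Theorem~\ref{characterization}(d).

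The remaining and most delicate situation is when $G$ contains a non-trivial block $B$ which is 2-connected with cyclomatic number $\geq 2$. By the ear decomposition of 2-connected graphs, $B$ contains a theta subgraph $\Theta_{a,b,c}$ (with at least two of $a,b,c$ being $\geq 2$, by simplicity). A direct enumeration of the $ab + bc + ca$ spanning trees of $\Theta_{a,b,c}$ and of their pairwise multiset sums $T_i + T_j$ verifies that $|2\B(M(\Theta))|$ strictly exceeds $|E(\Theta)|\,\tau(\Theta) - \binom{|E(\Theta)|}{2}$, giving $h_2 > 0$ for $F(I_{M(\Theta)})$. The main obstacle is extending this strict inequality from $\Theta$ to the full block $B$ by an ear-by-ear induction, carefully tracking the new spanning trees and symmetric exchange relations that each added ear contributes.
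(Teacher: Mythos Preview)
Your arguments for (c)$\Rightarrow$(a) and (b)$\Rightarrow$(c) are fine and match the paper's. The gap is in (a)$\Rightarrow$(b), and it is exactly where you yourself flag it: you never actually transfer the failure of the Freiman inequality from the small subgraph (two cycles, or a theta) back to the full graph~$G$. In your ``easier sub-case'' you compute $h_2$ for the Segre product of just the two factors $F(I_{M(B_i)})\#F(I_{M(B_j)})$, but $F(I_M)$ is the Segre product over \emph{all} blocks, and you give no argument that taking further Segre factors cannot kill $h_2$. In the theta case you say outright that the ear-by-ear induction is ``the main obstacle'' and leave it undone; tracking $\mu(I^2)$, $\ell(I)$ and the symmetric exchange relations through an ear decomposition is genuinely unpleasant and it is not clear your outline would close.

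The paper avoids both difficulties with a single reduction lemma (Lemma~\ref{pure}): if $\Gamma$ is any subgraph of $G$ on a subset of the edges, then $F(I_{M(\Gamma)})$ is a combinatorial pure subring of $F(I_M)$ in the sense of Ohsugi--Herzog--Hibi, so the graded Betti numbers of its toric ideal are dominated by those of $I_M$. Combined with Theorem~\ref{characterization}(g), this says: if $M(G)$ is Freiman then $M(\Gamma)$ is Freiman for every edge-subgraph~$\Gamma$. Once you have this, no induction is needed at all---you just restrict to the two-cycle configuration (Lemma~\ref{vertexincommon}) or to the theta graph (Lemma~\ref{pathincommon}), check directly that these are not Freiman, and conclude. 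This is the missing idea in your proposal; adding it would let you discard the block/Segre bookkeeping and the ear induction entirely.
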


We will need the following lemmas  before proving Theorem~\ref{wehope}. First,  we have

\begin{Lemma}
\label{pure}
Let $G$ be a  finite simple  graph whose cycle matroid is $M$. Let $E$ be a subset of the edges of $G$, $\Gamma$  the graph with $E(\Gamma)=E$ and $N$ be the cycle matroid of $\Gamma$. Then $N$ is a Freiman matroid.
\end{Lemma}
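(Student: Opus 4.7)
Assuming that $M$ is itself Freiman (without which the conclusion fails), I would prove that $N$ is Freiman by induction on $|E(G)|-|E|$, reducing to the case $\Gamma=G-e$ in which a single edge $e$ is removed, and then distinguishing three subcases according to the nature of $e$ in $G$.

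If $e$ is a loop of $G$, then $e$ belongs to no spanning forest, so $I_M$ and $I_N$ share the same minimal monomial generators and $F(I_M)=F(I_N)$ as graded $K$-algebras. If $e$ is a bridge, then every spanning forest of $G$ contains $e$, and the bijection $B\leftrightarrow B\setminus\{e\}$ between $\B(M)$ and $\B(N)$ combined with the identity $u_B=x_e\,u_{B\setminus\{e\}}$ yields a graded isomorphism $F(I_M)\cong F(I_N)$ (e.g.\ by specializing $x_e\mapsto 1$). In both of these cases the two fiber cones have the same $h$-vector, and Freiman-ness of $N$ transfers immediately via Theorem~\ref{characterization}.

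The main case is when $e$ is neither a loop nor a bridge. Here the spanning forests of $\Gamma$ are precisely those spanning forests of $G$ that avoid $e$, so $\B(N)\subseteq\B(M)$, and the matroid base polytope $P(N)$ is exactly the face $P(M)\cap\{y_e=0\}$ of $P(M)$. Algebraically this yields both a toric subalgebra inclusion $F(I_N)\subseteq F(I_M)$ and a graded surjection $\pi\colon F(I_M)\twoheadrightarrow F(I_N)$ (obtained by specializing $x_e\mapsto 0$) which is the identity on $F(I_N)$; hence $F(I_N)$ is a $K$-algebra retract of $F(I_M)$. I would then transfer condition (g) of Theorem~\ref{characterization} along this retract: Cohen--Macaulayness of $F(I_N)$ follows because faces of normal lattice polytopes remain normal, so that $F(I_N)$ is a normal toric ring to which Hochster's theorem applies; meanwhile, $2$-linearity of the defining ideal $J_N=J_M\cap T_N$ (where $T_N$ is the polynomial subring of $T_M$ spanned by the variables corresponding to bases in $\B(N)$) should be inherited from $2$-linearity of $J_M$ by restriction of the $2$-linear resolution.

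The main obstacle is this last step: verifying rigorously that $2$-linearity of $J_M$ descends to $J_N=J_M\cap T_N$, which requires a careful analysis of how the minimal free resolution of $J_M$ behaves under the polynomial subring inclusion $T_N\subseteq T_M$. An alternative route, which I would pursue if that resolution analysis turns out to be intractable, is to use condition (d) of Theorem~\ref{characterization} directly: exploit the direct-sum decomposition $F(I_M)=F(I_N)\oplus\ker\pi$ as graded $F(I_N)$-modules coming from the retract to compare the Hilbert series of $F(I_M)$ and $F(I_N)$, and then combine the hypothesis $h_2(F(I_M))=0$ with the non-negativity of $h$-coefficients from Corollary~\ref{remarkable} to conclude $h_2(F(I_N))=0$.
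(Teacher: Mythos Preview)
Your approach lands on the same core idea as the paper's: $F(I_N)$ sits inside $F(I_M)$ as the toric subring obtained by keeping only the monomial generators indexed by a subset of the bases, and one wants to push the $2$-linearity of the defining ideal down to this subring. The paper, however, does this in one stroke rather than by induction on $|E(G)|-|E|$: it observes that $F(I_N)$ is a \emph{combinatorial pure subring} of $F(I_M)$ in the sense of Ohsugi--Herzog--Hibi \cite{OHH}, and then invokes \cite[Corollary~2.5]{OHH}, which gives the Betti number inequality $\beta_{ij}^{T_N}(J_N)\leq \beta_{ij}^{T_M}(J_M)$. This is exactly the tool that resolves what you call your ``main obstacle'': once $J_M$ has a $2$-linear resolution, the inequality forces $J_N$ to have one as well. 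Cohen--Macaulayness of $F(I_N)$ is obtained just as you propose, via normality of the matroid base ring and Hochster's theorem. So your retract picture is correct and is essentially the combinatorial pure subring picture; what you were missing is the reference that makes the Betti-number descent rigorous without any ad hoc resolution analysis.

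Two minor comments on the scaffolding. The loop subcase is vacuous because $G$ is assumed simple. More importantly, your alternative route through condition~(d) has a real gap: the graded splitting $F(I_M)=F(I_N)\oplus\ker\pi$ compares Hilbert \emph{series}, but $\dim F(I_N)<\dim F(I_M)$ in the interesting case, so the two $h$-vectors are extracted against different powers of $(1-t)$, and the module decomposition alone does not yield an inequality between $h_2(F(I_N))$ and $h_2(F(I_M))$. Corollary~\ref{remarkable} gives $h_2\geq 0$ for each ring separately, but that is not enough to force $h_2(F(I_N))=0$ from $h_2(F(I_M))=0$.
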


\begin{proof}
We observe that $F(I_N)$ is a combinatorial pure subring of $F(I_M)$ in the sense of \cite{OHH}. We can write $F(I_M)=B/J$ and $F(I_N)=A/I$, where $A$ and $B$ are polynomial rings and  $J\subset B$ and $I\subset A$  are  the binomial relation ideals of $F(I_M)$ and $F(I_N)$, respectively. By \cite[Corollary 2.5]{OHH} we have $\beta_{ij}^A(I)\leq \beta_{ij}^B(J)$  for all $i,j$. Since $M$ is a Freiman matroid, it follows from Theorem~\ref{characterization} that $J$ has a $2$-linear $B$-resolution. Therefore the above inequality of Betti numbers implies  that $I$ has a $2$-linear $A$-resolution. Furthermore,  since $F(I_N)$ is normal, $F(I_N)$ is Cohen--Macaulay,  by Hochster's theorem, see  \cite{Ho}. We now apply again Theorem~\ref{characterization} and conclude that $N$ is a Freiman matroid.
\end{proof}

\begin{Lemma}
\label{vertexincommon}
Let $G$ be a  finite simple  graph consisting of two cycles $C_1$ and $C_2$ which have at most  one vertex in common.  Then the cycle matroid of $G$ is not a Freiman matroid.
\end{Lemma}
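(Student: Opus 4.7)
The plan is to realize $F(I_M)$ as the Segre product of two polynomial rings and then apply Theorem~\ref{characterization}(d). The key structural observation is that, since $C_1$ and $C_2$ meet in at most one vertex, every simple cycle of $G$ lies entirely in $C_1$ or entirely in $C_2$; hence no circuit of the cycle matroid $M$ contains edges from both. Consequently $M$ splits as a direct sum $M=M_1\oplus M_2$ of matroids, where each $M_i$ is the cycle matroid of $C_i$, namely the uniform matroid $U_{p_i-1,\,p_i}$ with $p_i=|E(C_i)|\ge 3$. A quick linear-independence check on the exponent vectors $\mathbf{1}-e_j$ shows that each $F(I_{M_i})$ is itself a polynomial ring in $p_i$ variables, whose natural generators I will denote $y_j$ and $z_k$ respectively.

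Labeling the edges of $C_1$ and $C_2$ in disjoint variable blocks, every basis $B$ of $M$ has the form $B=B_1\cup B_2$ with $u_B=u_{B_1}u_{B_2}=y_j z_k$. Setting $p=p_1$ and $q=p_2$, this identifies
\[
F(I_M)\ \cong\ K[\,y_j z_k\,:\,1\le j\le p,\ 1\le k\le q\,],
\]
the Segre product of polynomial rings in $p$ and $q$ variables. Its Hilbert series is $\sum_{k\ge 0}\binom{p+k-1}{p-1}\binom{q+k-1}{q-1}t^k$, which a routine manipulation rewrites as
\[
\frac{\sum_{i\ge 0}\binom{p-1}{i}\binom{q-1}{i}t^i}{(1-t)^{p+q-1}}.
\]
Since $p,q\ge 3$, the second $h$-coefficient satisfies $h_2=\binom{p-1}{2}\binom{q-1}{2}\ge 1$, so Theorem~\ref{characterization}(d) precludes $I_M$ from being Freiman.

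The main subtlety is being precise about the identification of $F(I_M)$ with the Segre product (rather than the tensor product): one must use $u_B=u_{B_1}u_{B_2}$ to see that the fiber cone is generated by the products $y_j z_k$ alone rather than by the $y_j$ and $z_k$ separately. As an alternative route that bypasses the $h$-vector computation, one could invoke Theorem~\ref{characterization}(g) together with the classical fact that the ideal of $2\times 2$ minors of a generic $p\times q$ matrix fails to have a $2$-linear resolution when $\min(p,q)\ge 3$; the same conclusion then follows.
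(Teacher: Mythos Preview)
Your proof is correct and follows the same structural line as the paper: identify $F(I_M)$ with the Segre product $S_1\#S_2$ of two polynomial rings (with $p=r_1$, $q=r_2$), and then use Theorem~\ref{characterization} to rule out the Freiman property. The paper reaches the Segre product via $I_M=I_{M_1}I_{M_2}$ just as you do.

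Where you diverge is in the computation that forces $h_2\neq 0$. The paper argues through the canonical module: using Goto--Watanabe, $\omega_T\cong S_1(-r_1)\#S_2(-r_2)$, whence $a(T)=-\max\{r_1,r_2\}$ and $\deg h(T;t)=\min\{r_1,r_2\}-1\ge 2$. You instead write down the Hilbert series of the Segre product explicitly and read off the closed form $h_i=\binom{p-1}{i}\binom{q-1}{i}$, so that $h_2=\binom{p-1}{2}\binom{q-1}{2}\ge 1$. Your route is more elementary and self-contained; the paper's route is more conceptual and yields the exact degree of the $h$-polynomial without computing individual coefficients. Either way the conclusion is immediate from $r_1,r_2\ge 3$. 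Your alternative remark via Theorem~\ref{characterization}(g) and the non-linearity of the resolution of $2\times 2$ minors of a generic $p\times q$ matrix for $\min(p,q)\ge 3$ is also valid, though it appeals to a fact outside the paper.

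One small note: the paper adds a final sentence handling the variant where $C_1$ and $C_2$ are disjoint but joined by a walk $P$ (so $I_M=uI_{M_1}I_{M_2}$ and $K[I_M]\cong K[I_{M_1}I_{M_2}]$). This goes slightly beyond the lemma as stated, and your argument already covers the stated hypothesis; you need not add it.
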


\begin{proof}
Let $M_i$ be the cycle matroid of $C_i$, and let $r_i=|E(C_i)|$.  Then $I_{M_i}$ can be identified with the squarefree Veronese ideal $I_{r_i,r_{i}-1}$ of degree $r_i-1$ in $r_i$ indeterminates. Hence the fiber cone of $I_{M_i}$ is isomorphic   to   $K[I_{r_i,r_i-1}]$ and
this ring is isomorphic to a polynomial ring $S_i$ over $K$ in $r_i$ indeterminates. Since  $C_1$ and $C_2$  have at most  one vertex in common,  each spanning forest  of $G$  is obtained from $G$ by removing one edge from $C_1$ and another  edge from $C_2$. This shows that $I_M=I_{M_1}I_{M_2}$. Hence the fiber cone of $I_M$ is isomorphic to the Segre product $T=S_1\# S_2$ of the polynomial rings $S_1$ and $S_2$.

By \cite[Theorem 4.2.3]{GW} we know that $\dim T=r_1+r_2-1$. Thus $\Hilb_T(t)=h(T;t)/(1-t)^{r_1+r_2-1}$, and so   the  $a$-invariant $a(T)$ of $T$ is equal to $\deg h(T;t)-(r_1+r_2-1)$ (see \cite[Definition 4.4.4]{BH}). Hence   we get
\begin{eqnarray}
\label{a}
\deg h(T;t)(t)=a(T)+(r_1+r_2-1).
\end{eqnarray}

On the other hand, if $\omega_T$ is the canonical module of $T$, then
\begin{eqnarray}
\label{b}
a(T)=-\min\{i\:\; (\omega_T)_i\neq 0\},
\end{eqnarray}
see \cite[Definition 3.1.4]{GW}. We now use another result of Goto and Watanabe \cite[Theorem 4.2.3]{GW}  which says that $\omega_T\iso \omega_{S_1}\# \omega_{S_2}$. Since $\omega_{S_i}=S_i(-r_i)$, we have  $\omega_T\iso  S_1(-r_1)\#S_2(-r_2)$. The $i$th graded component $(\omega_T)_i$ of $\omega_T$ is isomorphic to
\[
(S_1(-r_1)\# S_2(-r_2))_i=(S_1)_{i-r_1}\tensor (S_2)_{i-r_2}.
\]
Therefore, $\min\{i\:\; (\omega_T)_i\neq 0\}=\max\{r_1,r_2\}$. Combining (\ref{a}) with (\ref{b}),  we get
\[
\deg h(T;t)=r_1+r_2-1-\max\{r_1,r_2\}.
\]
Since each cycle has at least 3 edges, we see that $r_i\geq 3$. It follows that $\deg h(T;t)\geq 2$. Thus Theorem~\ref{characterization} implies that $M$ is not a Freiman matroid.

In the case that $C_1$ and $C_2$ are connected by a walk $P$, we have  $I_M=uI_{M_1}I_{M_2}$, where $u$ is the monomial whose factors correspond to the edge of $P$. Then $K[I_M]=K[uI_{M_1}I_{M_2}]\iso K[I_{M_1}I_{M_2}]$, and the rest of the proof is as before.
\end{proof}

\begin{Lemma}
\label{pathincommon}
Let $G$ be a finite simple graph consisting of a cycle  $C$ of length $r$ and  a walk  $W$  of length $s$ which connects two non-consecutive  vertices of $C$. Then the cycle matroid $M$ of $G$ is not a Freiman matroid.
\end{Lemma}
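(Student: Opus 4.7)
My approach is to identify the fiber cone $F(I_M)$ with the edge ring $K[\Gamma]$ of a complete tripartite graph $\Gamma$, and then apply the classification of Freiman graphs given by Theorem~\ref{hibi}. Let the two endpoints of $W$ split $C$ into arcs $A_1, A_2$ of edge-lengths $r_1, r_2$, with $r_1+r_2=r$; since the endpoints are non-consecutive, $r_1, r_2\geq 2$. Since $|V(G)|=r+s-1$ and $|E(G)|=r+s$, spanning trees of $G$ are obtained by removing exactly two edges. The three primitive cycles of $G$ are $C$, $A_1\cup W$, and $A_2\cup W$, and a direct check shows that removing two edges produces a spanning tree precisely when the removed edges come from two \emph{different} arcs among $\{A_1, A_2, W\}$. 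Thus the bases of $M$ are in bijection with the $2$-subsets $\{e,e'\}\subset E(G)$ whose elements lie in different arcs---equivalently, with the edges of the complete tripartite graph $\Gamma=K_{r_1,r_2,s}$ whose vertex set is $E(G)$ partitioned by the three arcs.

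Next I would establish the isomorphism $F(I_M)\iso K[\Gamma]$. Writing $X=\prod_{f\in E(G)}x_f$ and sending each basis $B$ to its complement $B^c=\{e,e'\}$, the generator $u_B=X/(x_ex_{e'})$ of $F(I_M)$ corresponds to the generator $z_ez_{e'}$ of $K[\Gamma]$. Both rings are presented as quotients of the polynomial ring $K[y_B : B\in\B(M)]$ by a toric binomial ideal. A binomial $y_{B_1}y_{B_2}-y_{B_3}y_{B_4}$ lies in the defining ideal of $F(I_M)$ iff $\chi_{B_1}+\chi_{B_2}=\chi_{B_3}+\chi_{B_4}$ as integer exponent vectors in $\ZZ^{E(G)}$, and this condition is equivalent to $\chi_{B_1^c}+\chi_{B_2^c}=\chi_{B_3^c}+\chi_{B_4^c}$, which is precisely the defining condition for the toric ideal of $K[\Gamma]$. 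Extending in the same way to relations of all degrees, the two toric ideals agree and the asserted isomorphism follows; this verification is the main obstacle I anticipate, since one must carefully align the complementary indexings on the two sides.

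Finally, I would verify that $\Gamma=K_{r_1,r_2,s}$ is not a Freiman graph. Since $r_1, r_2\geq 2$ and $s\geq 1$, every edge of $\Gamma$ lies in some $4$-cycle: an edge joining two parts of size $\geq 2$ completes to a $4$-cycle inside those two parts, and an edge incident to a part of size $1$ (when $s=1$) completes to a $4$-cycle by detouring through a second vertex of one of the other parts. Hence the subgraph $H$ in Theorem~\ref{hibi} equals $\Gamma$ itself. But $\Gamma$ contains the triangle spanned by one vertex from each part, so $H$ is not bipartite, and in particular not of type $(2,t)$; and $\Gamma$ has primitive even walks (e.g.\ $4$-cycles), so Theorem~\ref{hibi} forces $\Gamma$ to fail the Freiman property. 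Via the isomorphism $K[\Gamma]\iso F(I_M)$, Theorem~\ref{characterization} then yields that the $h$-vector of $F(I_M)$ has some $h_i\neq 0$ with $i\geq 2$, so $I_M$ is not a Freiman ideal and $M$ is not a Freiman matroid.
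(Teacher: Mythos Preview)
Your approach is correct and genuinely different from the paper's. The paper proceeds by a direct Hilbert series computation: it writes $I_M$ as a sum of mixed products of squarefree Veronese ideals attached to the three arcs, invokes a formula of Hoa--Tam for the Hilbert series of such mixed products, and then reads off that $\deg h(F(I_M);t)\geq 2$ in all cases. Your route is more structural: you recognise that the theta graph has cycle rank $2$, so every spanning tree is obtained by deleting one edge from each of two distinct arcs, and hence the exponent vectors of the generators of $I_M$ are exactly the complements of the exponent vectors of the edge monomials of the complete tripartite graph $\Gamma=K_{r_1,r_2,s}$; the complementation $\ab\mapsto \mathbf{1}-\ab$ is an affine isomorphism, so the toric ideals agree and $F(I_M)\iso K[\Gamma]$ as standard graded $K$-algebras. (In matroid language you are using that the base ring of $M$ and of its dual $M^*$ coincide, and that $I_{M^*}=I(\Gamma)$.) Then Theorem~\ref{hibi} disposes of $\Gamma$ immediately, since $H=\Gamma$ contains a triangle and hence is not bipartite. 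The paper's computation yields extra quantitative information (an explicit $h$-polynomial, in particular $\deg h\geq 3$ rather than merely $\geq 2$), while your argument is shorter, avoids the external reference to Hoa--Tam, and nicely recycles the Freiman-graph classification already established in Section~2.1. The one place to tighten is your assertion that ``every edge of $\Gamma$ lies in some $4$-cycle'' when $s=1$: your parenthetical explanation is correct, but it is worth stating the explicit $4$-cycle $a,c,a',b,a$ with $a,a'\in P_1$, $b\in P_2$, $c\in P_3$ so the reader sees at once that only $r_1,r_2\geq 2$ is needed.
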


\begin{proof} By hypothesis, the walk  $W$ divides cycle $C$ into two walks   $W_1$ and  $W_2$, where $W_1$, $W_2$ and $W$ have  common starting point and endpoint. Thus $C=W_1\cup W_2$.  For  simplicity we set $W_3=W$.  Let $M_i$ be the cycle matroid of $W_i$, $r_i=|E(W_i)|$, and  let $M_4$ be the cycle matroid of $C$ and  $r_4=|E(C)|$.  Then $r_3=s$, $r_4=r_1+r_2=r$. By the proof   of Lemma \ref{vertexincommon}, the fiber cone  $F(I_{M_i})$ of $I_{M_i}$ is isomorphic  to $K[I_{r_i,r_i-1}]$ and this ring is isomorphic to a polynomial ring $S_i$ over $K$ in $r_i$ indeterminates.  Therefore, we get $I_M=I_{r_3,r_{3}-1}I_{{r_1+r_2,r_{1}+r_{2}-1}}+JI_{r_3,r_3}$,  where $J=I_{r_1,r_1-1}I_{r_2,r_2-1}$. We have  $I_{r_3,r_3}\subset I_{r_3,r_{3}-1}$ and $I_{{r_1+r_2,r_{1}+r_{2}-1}}\subset J$, Let  $S=S_3\otimes S_4$ and $\Hilb_{F(I_{M})}(t)=h(S;t)/(1-t)^{\ell}$, where $\ell=\dim(F(I_{M}))$. By \cite[Theorem 1.6]{HT}, we have

  \begin{eqnarray*}
 \Hilb_{S/I_{M}}(t)&=&\Hilb_{S/(I_{r_3,r_{3}-1}I_{{r_1+r_2,r_{1}+r_{2}-1}}+JI_{r_3,r_3})}(t)\\
 &=&\Hilb_{S_3/I_{r_3,r_{3}-1}}(t)\Hilb_{I_{{r_1+r_2,r_{1}+r_{2}-1}}}(t)+\Hilb_{S_4/J}(t)\Hilb_{I_{r_3,r_3}}(t)\\
 &+&\Hilb_{S_3/I_{r_3,r_3}}(t)\Hilb_{S_4/I_{{r_1+r_2,r_{1}+r_{2}-1}}}(t)\\
 &=&\Hilb_{I_{{r_1+r_2,r_{1}+r_{2}-1}}}(t)+\Hilb_{S_4/J}(t)\Hilb_{I_{r_3,r_3}}(t)+\Hilb_{S_3/I_{r_3,r_3}}(t).
 \end{eqnarray*}
Hence
\begin{eqnarray*}
\Hilb_{F(I_{M})}(t)&=&\frac{1}{(1-t)^{r_3+r_4}}-\Hilb_{S/I_{M}}(t)\\
&=&\frac{1}{(1-t)^{r_3+r_4}}-[\sum\limits_{m=0}^{\infty}{m+r_4-1\choose m}t^m+(\frac{1}{(1-t)^{r_4}}-\frac{h(S_4;t)}{(1-t)^{r_4-1}})\sum\limits_{m=0}^{\infty}t^m\\
&+&\frac{1}{(1-t)^{r_3-1}}]\\
&=&\frac{1}{(1-t)^{r_3+r_4}}-[\frac{1}{(1-t)^{r_4}}+[\frac{1}{(1-t)^{r_4}}-\frac{h(S_4;t)}{(1-t)^{r_4-1}}]\frac{1}{1-t}
+\frac{1}{(1-t)^{r_3-1}}]\\
&=&\frac{1}{(1-t)^{r_3+r_4}}-\frac{(1-t)^{r_3}+(1-t)^{r_3-1}[1-(1-t)h(S_4;t)]+(1-t)^{r_4+1}}{(1-t)^{r_3+r_4}}
\\
&=&\frac{1-(1-t)^{r_3}-(1-t)^{r_3-1}[1-(1-t)h(S_4;t)]-(1-t)^{r_4+1}}{(1-t)^{r_3+r_4}}\\
&=&\frac{1-(1-t)^{s}-(1-t)^{s-1}[1-(1-t)h(S_4;t)]-(1-t)^{r+1}}{(1-t)^{r+s}},
\end{eqnarray*}
where $\deg h(S_4;t)=\min\{r_1,r_2\}-1$, as seen in  the proof of Lemma \ref{vertexincommon}.

\medskip
We consider the following two cases:

(1)  If $s=1$, then $\Hilb_{F(I_{M})}(t)=\frac{h(S_4;t)-1-(1-t)^{r}}{(1-t)^{r}}$ and $h(S;t)=h(S_4;t)-1-(1-t)^{r}$, Since $\deg\,h(S_4;t)=\min\{r_1,r_2\}-1$ and $r=r_1+r_2$,
 $\deg\,h(S;t)\geq \max\{r_1,r_2\}+1$. Notice that $r_i\geq 2$, we have that $\deg\,h(S;t)\geq 3$;

(2)  If $s\geq 2$, then  $h(S;t)=1-(1-t)^{s}-(1-t)^{s-1}[1-(1-t)h(S_4;t)]-(1-t)^{r+1}$ such that
 $h(S;1)=1$ and $\deg\,h(S;t)\geq 5$.

Thus Theorem~\ref{characterization} implies that the cycle matroid $M$ of $G$ is not a Freiman matroid.
\end{proof}

\medskip
Now we are ready to prove   Theorem \ref{wehope}.

\begin{proof}
(a)\implies (b): Suppose there is a cycle $C$  for which there exists a walk  $W$    which connects two non-consecutive vertices of $C$. Then let $\Gamma$ be the graph with edges $E(W)\union E(C)$. By Lemma~\ref{pathincommon} the cycle matroid of $\Gamma$ is not a Freiman matroid, and so $M$ is not a Freiman matroid by Lemma~\ref{pure}, a contradiction. Thus there is  no cycle $C$ of $G$  such that there exists a walk  $W$    which connects two non-consecutive vertices of $C$.

Let us now assume that $G$ contains at least two cycles $C_1$ and $C_2$. We discuss two cases: (i) $V(C_1)\sect V(C_2)=\emptyset$, and  (ii) $V(C_1)\sect V(C_2)\neq \emptyset$.

Let $\Gamma$ be the graph with $E(\Gamma)= E(C_1)\union E(C_2)$.
In case (i), the cycle matroid of $\Gamma$ is not a Freiman matroid, see Lemma~\ref{vertexincommon}. Therefore, according to Lemma~\ref{pure},  also $M$ is not a Freiman matroid.

In case (ii), by  using Lemma~\ref{pure},  we may assume that $\Gamma=G$. Let $S=V(C_1)\sect V(C_2)$, and $\Sigma$ the graph  which is obtained from $G$ by restriction to $S$. Then $\Sigma$ is a disjoint union of walks and   some isolated vertices. If $\Sigma$ is just  a  single vertex, $M$ is not  a Freiman matroid  by  Lemma~\ref{vertexincommon},  and if $\Sigma$ is just a single walk, then the edges $E(C_1)\union E(C_2)\setminus E(\Sigma)$ form a cycle $C$ and $\Sigma$ is a walk   which connects two non-consecutive vertices of $C$. Thus  Lemma~\ref{pathincommon} implies that $M$ is not a Freiman matroid. Finally assume that $\Sigma$ has more than one  connected component. Let $V(C_1)=\{v_1,\ldots,v_r\}$ and $V(C_2)=\{w_1,\ldots, w_s\}$ with a counter-clockwise labeling of the vertices. We may assume without loss of generality that $v_1=w_1$ and $v_2\neq w_2$, since $\Sigma$ is not connected. There exist integers $a,b$ with the property that $v_a=w_b$ and $v_i\neq w_j$ for all $i,j$ with $1<i<a$ and $1<j<b$.  It then follows that  the  walk $W'$ with $V(W')=\{w_1,w_2,\ldots, w_b\}$ is a walk connecting the vertices  $v_1$ and $v_a$ of $C_1$. Thus $M$ is not a Freiman matroid.

(b) \implies (c): Let $G_1,\ldots,G_s$ be the connected components of $G$ and let $M_i$ be the cycle matroid of $G_i$.  Since $G$ contains at most one cycle, we may assume that the components $G_i$ are trees for $i\geq 2$.   Then,  as in the proof of Lemma~\ref{pathincommon},  we see that $I_{M_1}=uI_{r,r-1}$ if $G_1$ contains a cycle $C$ with $|E(C)|=r$, or otherwise  $I_{M_1}$ is a principal  monomial ideal. Moreover,  all  $I_{M_i}$ for $i\geq 2$ are  principal  monomial ideals.  Since  $I_M=I_{M_1}\cdots I_{M_s}$, we have  $K[I_M]\iso K[I_{M_1}]$,  and  hence $K[I_M]$ is a polynomial ring.

(c)\implies(a) is obvious.
\end{proof}

We close this paper with discussing the regularity of the base ring of the cycle matroid of a simple graph.  Let, as before, $G$ be a finite simple graph on $[n]$ and $E(G) = \{e_1, \ldots, e_m \}$ its edge set.  Let ${\bf e}_1, \ldots, {\bf e}_m$ denote the canonical coordinate vectors of $\RR^m$.  Given a subset $F \subset E(G)$, one defines $\rho(F) = \sum_{e_i \in F} {\bf e}_i \in \RR^m$.  For example, $\rho(\{ e_2, e_3, e_5 \}) = {\bf e}_2 + {\bf e}_3 + {\bf e}_5 = (0,1,1,0,1,0,\ldots,0) \in \RR^m$.  The {\em base polytope} $\P(M)$ of the cycle matroid $M$ of $G$ is the convex polytope which is the convex hull of the finite set $\{ \rho(F) : \rho(F) \in \B \}$ in $\RR^m$.  One has $\dim F(I_M) = \dim \P(M) + 1$.

Recall that, when $G$ is connected, a vertex $i \in [n]$ of $G$ is said to be a {\em cut vertex} if the induced subgraph $G|_{[n]\setminus\{i\}}$ of $G$ on $[n]\setminus\{i\}$ is disconnected.  A finite connected simple graph $G$ is called {\em $2$-connected} if no vertex of $G$ is a cut vertex of $G$.  It then follows from \cite[Theorem 5]{Whi} that

\begin{Lemma}
\label{basedim}
Let $G$ be a finite $2$-connected simple graph on $[n]$ with $e$ edges.  Let $M$ be the cycle matroid of $G$  and $\P(M)$ the base polytope of $M$.  Then
\[
\ell(I_M) = \dim \P(M) + 1 = e.
\]
\end{Lemma}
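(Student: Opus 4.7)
The plan is to split the chain $\ell(I_M) = \dim \P(M) + 1 = e$ into two separate statements and handle them independently. The first equality is a general fact about toric fiber cones of equigenerated monomial ideals; the second is an application of a theorem of Whitney on base polytopes of connected matroids.

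First I would note that, since $G$ is connected on $[n]$, every spanning forest of $G$ is a spanning tree and thus has exactly $n-1$ edges. Consequently $I_M$ is equigenerated of degree $n-1$: every exponent vector $\rho(B)$ of a generator $u_B$ satisfies $\langle \mathbf{1},\rho(B)\rangle = n-1$. Because $I_M$ is generated by monomials, its fiber cone $F(I_M)$ is isomorphic to the toric $K$-algebra $K[u_B : B \in \B]$, whose Krull dimension coincides with the rank of the $\ZZ$-lattice generated by the exponent vectors $\{\rho(B) : B \in \B\}$. Since all these exponent vectors lie on the affine hyperplane $\sum x_i = n-1$, the rank of this lattice equals the dimension of the affine hull of $\{\rho(B)\}$ plus one, which is $\dim \P(M) + 1$. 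This gives
\[
\ell(I_M) \;=\; \dim F(I_M) \;=\; \dim \P(M) + 1.
\]

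For the second equality I invoke the theorem of Whitney cited as \cite[Theorem 5]{Whi}: the base polytope of a connected matroid on a ground set of size $m$ has dimension $m-1$. So it remains only to observe that the cycle matroid $M$ of a $2$-connected simple graph $G$ is a connected matroid. This is the standard matroid-theoretic translation: $G$ has no cut vertex and no loops if and only if every pair of edges of $G$ lies in a common cycle, which is precisely the condition for $M$ to be connected as a matroid. Applying Whitney's theorem with $m = e$ then yields $\dim \P(M) = e-1$, completing the proof.

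The only delicate point, and the one I expect to be the main obstacle in writing this up cleanly, is justifying rigorously the passage from ``fiber cone Krull dimension'' to ``affine hull dimension plus one'' for the set of exponent vectors of $G(I_M)$; one must argue either via the standard toric dictionary (identifying $F(I_M)$ with the semigroup ring on the affine lattice generated by the $\rho(B)$) or by citing the relation between analytic spread and the dimension of the Newton polytope of an equigenerated monomial ideal. The remainder is a direct invocation of \cite[Theorem 5]{Whi} once the matroid-theoretic meaning of $2$-connectedness is identified.
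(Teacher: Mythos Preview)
Your proposal is correct and follows essentially the same approach as the paper, which gives no proof beyond citing \cite[Theorem~5]{Whi}; you have simply made explicit the two implicit ingredients---the general identity $\ell(I_M)=\dim\P(M)+1$ (which the paper records in the paragraph preceding the lemma) and the fact that $2$-connectedness of $G$ forces connectedness of the matroid $M$. One small slip: the reference \cite{Whi} is to White, not Whitney, and his Theorem~5 is phrased in terms of the Krull dimension of the basis monomial ring rather than the base-polytope dimension, but the two formulations are equivalent via your first step.
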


\begin{Corollary}
\label{basedimCOR}
Let $G$ be a finite connected simple graph on $[n]$ with $e$ edges and write $c$ for the number of cut vertices of $G$.  Let $M$ be the cycle matroid of $G$ and $\P(M)$ the base polytope of $M$.  Then
\[
\ell(I_M) = \dim \P(M) + 1 = e - c.
\]
\end{Corollary}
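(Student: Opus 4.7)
The plan is to decompose $G$ into its blocks and apply Lemma~\ref{basedim} to each one. Let $B_1,\dots,B_b$ be the blocks of $G$ (the maximal $2$-connected subgraphs, with each bridge counted as a block), and set $e_i = |E(B_i)|$; then the $E(B_i)$ are pairwise disjoint and $\sum_i e_i = e$. The identity $\ell(I_M) = \dim\P(M) + 1$ itself is not specific to the $2$-connected case: for any quasi-equigenerated monomial ideal, $F(I_M)$ is the toric algebra generated by the monomials $u_F t$, whose Krull dimension is one plus the affine $\QQ$-dimension of the set of exponent vectors of its generators, which is precisely $\dim\P(M)$.

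The first substantive step is the factorization
\[
I_M \;=\; I_{M_1}\,I_{M_2}\cdots I_{M_b},
\]
where $M_i$ is the cycle matroid of $B_i$. This rests on the classical fact that $F \subseteq E(G)$ is a spanning tree of $G$ if and only if $F \cap E(B_i)$ is a spanning tree of $B_i$ for every $i$ --- a consequence of the property that any path in $G$ between two vertices of a single block stays inside that block. Once this is in hand, since $E(B_1),\dots,E(B_b)$ are pairwise disjoint, the generators of $I_M$ live in pairwise disjoint sets of variables, and the exponent vector set of $G(I_M)$ is the Minkowski sum $A_1+\cdots+A_b$ with each $A_i$ supported in its own coordinate block of $\RR^e$. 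A short linear-algebraic computation (fix a base point in each $A_j$, $j\neq i$, and translate) then yields
\[
\dim\P(M) \;=\; \sum_{i=1}^b \dim\P(M_i).
\]
Applying Lemma~\ref{basedim} to each $2$-connected $B_i$, and noting that $\dim\P(M_i) = 0 = e_i - 1$ whenever $B_i$ is a bridge, one obtains $\dim\P(M) = \sum_i(e_i - 1) = e - b$, whence $\ell(I_M) = e - (b-1)$.

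The final step is to rewrite $b-1$ as $c$ by means of the block-cut tree $T$ of $G$, which has $b + c$ vertices and $b + c - 1$ edges. Counting incidences (each cut vertex belongs to as many blocks as its degree in $T$) reduces the desired identity to a combinatorial statement about $T$. I expect this last step to be the main obstacle, since in general a cut vertex may lie in more than two blocks and the bookkeeping between $b$, $c$, and these incidences must be arranged carefully to extract the clean equality $b-1 = c$. Modulo this combinatorial identification, the corollary follows immediately from Lemma~\ref{basedim} together with the block decomposition above.
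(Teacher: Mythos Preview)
Your block decomposition is sound, and the computation
\[
\ell(I_M)=\dim\P(M)+1=\sum_{i=1}^b(e_i-1)+1=e-b+1
\]
is correct; this is exactly what White's result gives for an arbitrary matroid, the $b$ here being the number of matroid-connected components, which for the cycle matroid of a connected graph are precisely the blocks of $G$. Your hesitation about the last step is fully justified: the identity $b-1=c$ is \emph{false} whenever some cut vertex lies in three or more blocks. For the star $K_{1,3}$ one has $e=3$, $b=3$, $c=1$, and $I_M$ is principal, so $\ell(I_M)=1=e-(b-1)$ while $e-c=2$. More generally, counting edges of the block--cut tree gives $b+c-1=\sum_{v\ \text{cut}}(\text{number of blocks through }v)\geq 2c$, hence always $b-1\geq c$, with equality iff every cut vertex lies in exactly two blocks.

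So the obstacle you anticipated is not a bookkeeping subtlety but an actual failure: the Corollary as stated is incorrect, and the right formula is $\ell(I_M)=e-b+1$. The paper's own inductive proof (via a single cut vertex and a Segre product) hides the same slip in the line ``$c=c'+c''+1$'': when the chosen cut vertex $n$ remains a cut vertex of one of the pieces $G|_{U\cup\{n\}}$ or $G|_{V\cup\{n\}}$---which happens exactly when $n$ lies in at least three blocks---it is counted both in the ``$+1$'' and in $c'$ or $c''$. There is nothing to fix in your argument up to $e-b+1$; the discrepancy is in the target statement, and your proof in fact corrects it.
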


\begin{proof}
Let, say, $n \in [n]$ be a cut vertex of $G$.  Let $G'$ and $G''$ be the connected components of the induced subgraph $G|_{[n-1]}$.  Write $U$ for the vertex set of $G'$ and $V$ that of $G''$.  Let $M'$ be the cycle matroid of $G|_{U \cup \{n\}}$ and $M''$ that of $G|_{V \cup \{n\}}$.  Let $e'$ be the number of edges of $G|_{U \cup \{n\}}$ and $e''$ that of $G|_{V \cup \{n\}}$.  Since the base ring $F(I_M)$ is the Segre product of $F(I_{M'})$ and $F(I_{M''})$, it follows that $\dim F(I_M) = \dim F(I_{M'}) + \dim F(I_{M''}) - 1$.  Let $c'$ be the number of cut vertices of $G'$ and $c''$ that of $G''$.  Then using induction yields $\dim F(I_{M'}) = e' - c'$ and $\dim F(I_{M''}) = e'' - c''$.  Hence $\dim F(I_M) = (e' - c') + (e'' - c'') - 1$.  Since $e=e'+e''$ and $c=c'+c''+1$, the desired formula $\dim F(I_M) = \dim \P(M) + 1 = e - c$ follows.
\end{proof}

\begin{Corollary}
\label{basedimCORCOR}
Let $G$ be a finite disconnected simple graph on $[n]$ with $e$ edges and $G_1, \ldots, G_s$ its connected component.  Let $c_i$ denote the number of cut vertices of $G_i$ for $1 \leq i \leq s$.  Let $M$ be the cycle matroid of $G$  and $\P(M)$ the base polytope of $M$.  Then
\[
\ell(I_M) = \dim \P(M) + 1 = e - c - s + 1.
\]
\end{Corollary}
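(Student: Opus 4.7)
The strategy is to combine two facts: the matroidal ideal $I_M$ factorizes as a product of the matroidal ideals of the connected components in pairwise disjoint variable sets, and this product structure turns the fiber cone of $I_M$ into an iterated Segre product, reducing the assertion to Corollary~\ref{basedimCOR} applied component by component.

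Let $M_i$ denote the cycle matroid of $G_i$, set $e_i = |E(G_i)|$, and write $S = K[x_e : e \in E(G)] = S_1 \tensor_K \cdots \tensor_K S_s$, where $S_i = K[x_e : e \in E(G_i)]$. Since a spanning forest of $G$ is precisely a disjoint union of a spanning tree from each connected component, one has
\[
I_M = I_{M_1} I_{M_2} \cdots I_{M_s},
\]
with the generators of the $I_{M_i}$ involving pairwise disjoint variable supports. Consequently, every minimal monomial generator of $I_M^k$ factors uniquely as a product of a minimal monomial generator from each $I_{M_i}^k$, and on the level of fiber cones this yields a graded $K$-algebra isomorphism
\[
F(I_M) \iso F(I_{M_1}) \# F(I_{M_2}) \# \cdots \# F(I_{M_s}),
\]
where $\#$ denotes the Segre product.

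Iterating the identity $\dim(A \# B) = \dim A + \dim B - 1$ from \cite[Theorem 4.2.3]{GW} then gives
\[
\dim F(I_M) = \sum_{i=1}^{s} \dim F(I_{M_i}) - (s-1).
\]
By Corollary~\ref{basedimCOR} we have $\dim F(I_{M_i}) = e_i - c_i$ for each $i$. Using $e = \sum_{i=1}^s e_i$ and $c = \sum_{i=1}^s c_i$, summation produces
\[
\ell(I_M) = \dim F(I_M) = \sum_{i=1}^{s}(e_i - c_i) - (s-1) = e - c - s + 1,
\]
and the equality $\dim F(I_M) = \dim \P(M) + 1$, recorded just before Lemma~\ref{basedim}, finishes the proof. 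The only substantive point is the Segre product identification; this is the multi-component analogue of the argument used in the proof of Corollary~\ref{basedimCOR}, and it is essentially immediate once one notes that the $k$th graded component of $F(I_M)$ decomposes as the $K$-tensor product of the $k$th graded components of the $F(I_{M_i})$ because the supports are disjoint.
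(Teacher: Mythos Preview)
Your proof is correct and follows essentially the same approach as the paper's: both identify $F(I_M)$ as the Segre product of the $F(I_{M_i})$ and then invoke Corollary~\ref{basedimCOR} component by component. Your write-up simply spells out the details the paper leaves implicit---the factorization $I_M = I_{M_1}\cdots I_{M_s}$ coming from the fact that spanning forests split over components, the resulting tensor decomposition of graded pieces, and the iterated use of the Goto--Watanabe dimension formula---so there is no substantive difference in strategy.
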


\begin{proof}
Let $M_i$ be the cycle matroid of $G_i$ for $1 \leq i \leq s$.
Since the base ring $F(I_M)$ is the Segre product of the base rings $F(I_{M_1}), \ldots, F(I_{M_s})$, the required formula follows immediately from Corollary \ref{basedimCOR}.
\end{proof}

Let $G$ be a finite $2$-connected simple graph on $[n]$ with $e$ edges and $\P(M)$ the base polytope of the cycle matroid $M$ of $G$.  Thus $\dim \P(M) = e - 1$.  We introduce the sequence of integers $(\delta_0, \delta_1, \delta_2, \ldots)$ by the formula
\[
(1 - \lambda)^e \sum_{i=0}^{\infty}\delta_i t^i = \sum_{i=0}^{\infty} |i\P(M) \cap \ZZ^m|\,t^i,
\]
where $i\P(M)$ is the $i$th dilated polytope $\{ i\alpha : \alpha \in \P(M) \}$ of $\P(M)$.  It then follows that $\delta_i = 0$ for $i > \dim \P(M) = e - 1$.  We say that $\delta(\P(M))=(\delta_0, \delta_1, \ldots, \delta_{e-1})$ is the {\em $\delta$-vector} of $\P(M)$ and $\delta(\P(M);t) = \sum_{i=0}^{e-1} \delta_i t^i$ is the {\em $\delta$-polynomial} of $\P(M)$.  Furthermore, introducing the integer $r_0 \geq 1$ by
\[
r_0 = \min\{  r\in \ZZ : r\geq 1 \ \text{and}\ r(\P(M) \setminus \partial \P(M)) \cap \ZZ^m \neq \emptyset \},
\]
where $\P(M) \setminus \partial \P(M)$ is the relative interior of $\P(M)$, one has the formula
\begin{eqnarray}
\label{degdelta}
\deg \delta(\P(M);t) = e - r_0.
\end{eqnarray}
We refer the reader to \cite{HibiRedBook} for the detailed information about $\delta$-vectors.

Since the base polytope $\P(M)$ possesses the {\em integer decomposition property} (\cite[p.~250]{HH1}), it follows that

\begin{Lemma}
\label{idp}
The $\delta$-polynomial $\delta(\P(M);t)$ of $\P(M)$ coincides with the $h$-polynomial of $F(I_M)$.
\end{Lemma}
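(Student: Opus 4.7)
The plan is to leverage the integer decomposition property (IDP) of the base polytope to identify the Ehrhart series of $\P(M)$ with the Hilbert series of $F(I_M)$, and then to read off the $h$-polynomial and $\delta$-polynomial as the respective numerators.

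First I would recall the concrete description of $F(I_M)$. By construction, $F(I_M)$ is isomorphic to the toric ring
\[
K[\xb^{\rho(B)}t \: B \in \B] \subset K[x_1,\ldots,x_m,t],
\]
so for each $k \geq 0$ its $k$th graded component has a $K$-basis indexed by those lattice points of $k\P(M)$ that can be written as a sum of $k$ lattice points of $\P(M)$; in general this may be a proper subset of $k\P(M) \cap \ZZ^m$. This is exactly where the IDP of $\P(M)$, quoted from \cite[p.~250]{HH1}, enters: every lattice point of $k\P(M)$ is a sum of $k$ lattice points of $\P(M)$, and since $\P(M) \cap \ZZ^m = \{\rho(B) : B \in \B\}$ (the vertices of $\P(M)$ are its only lattice points, as $\P(M)$ is the matroid base polytope), one obtains
\[
\dim_K F(I_M)_k = |k\P(M) \cap \ZZ^m|
\]
for all $k \geq 0$.

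Next I would compare Hilbert series with Ehrhart series. The above equality gives
\[
\Hilb_{F(I_M)}(t) = \sum_{k \geq 0} |k\P(M) \cap \ZZ^m|\, t^k.
\]
By Lemma \ref{basedim}, $\dim F(I_M) = \dim \P(M) + 1 = e$, so the left-hand side equals $h(F(I_M);t)/(1-t)^e$. By the very definition of the $\delta$-polynomial, the right-hand side equals $\delta(\P(M);t)/(1-t)^e$. Equating numerators yields $h(F(I_M);t) = \delta(\P(M);t)$, as required.

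The only substantive input is the IDP of the matroid base polytope, for which I simply cite \cite{HH1}; the rest is the standard dictionary between Ehrhart series and Hilbert series of toric rings generated in degree one. There is no real obstacle here, but one should be careful about the dimension count that makes the denominators $(1-t)^e$ match on both sides, which is precisely what Lemma~\ref{basedim} secures in the $2$-connected setting under consideration.
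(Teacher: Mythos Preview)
Your proposal is correct and follows exactly the approach the paper has in mind: the paper presents the lemma as an immediate consequence of the integer decomposition property of $\P(M)$ (citing \cite[p.~250]{HH1}), and you have simply spelled out the standard Ehrhart--Hilbert dictionary that makes this implication precise. The use of Lemma~\ref{basedim} to match the denominators $(1-t)^e$ is appropriate, since the surrounding discussion in the paper is explicitly in the $2$-connected setting.
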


Since the base ring $F(I_M)$ is Cohen--Macaulay, it is well-known  that the regularity $\reg(F(I_M))$ of $F(I_M)$ is one more than the degree of the $h$-polynomial $h(F(I_M);t)$ of $F(I_M)$.  In other words, the regularity $\reg(F(I_M))$ is one more than the degree of the $\delta$-polynomial $\delta(\P(M);t)$ of $\P(M)$.  Since the hyperplanes $x_i = 0$ and $x_i = 1$ of $\RR^m$ are supporting hyperplanes of $\P(M)$ for $1 \leq i \leq n$, it follows that no lattice point of $\RR^m$ belongs to the relative interior of $\P(M)$.  Hence,  by using the formula (\ref{degdelta}), one has $\deg \delta(\P(M);t) = \deg h(F(I_M));t) \leq e - 2$.  Thus,  in particular $\reg(F(I_M)) \leq e - 1$.  Furthermore, Theorem \ref{wehope} guarantees that, unless $F(I_M)$ is the polynomial ring, one has $\deg h(F(I_M),t) \geq 2$ and $\reg(I_M) \geq 3$.  As a result,

\begin{Theorem}
\label{regularity}
Let $G$ be a finite $2$-connected simple graph on $[n]$ with $e$ edges.  Let $M$ be the cycle matroid of $G$.  Unless $F(I_M)$ is the polynomial ring, one has
\[
3 \leq \reg(F(I_M)) \leq e - 1.
\]
\end{Theorem}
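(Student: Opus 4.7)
The plan is to short-circuit the argument by combining Cohen--Macaulayness of $F(I_M)$ with the Ehrhart-theoretic description of its $h$-polynomial supplied by Lemma \ref{idp}, and then to read off both bounds from lattice-point considerations on $\P(M)$ together with Theorem \ref{wehope}.

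First I would record the overarching reduction. Since $F(I_M)$ is a standard graded Cohen--Macaulay $K$-algebra (the base ring of a matroid is a normal toric ring and hence Cohen--Macaulay by Hochster's theorem), the Castelnuovo--Mumford regularity is one more than the degree of its $h$-polynomial, so $\reg(F(I_M)) = 1+\deg h(F(I_M);t)$. By Lemma \ref{idp} this equals $1+\deg \delta(\P(M);t)$. Thus both inequalities in the theorem reduce to bounding $\deg \delta(\P(M);t)$.

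For the upper bound, I would use Lemma \ref{basedim} to get $\dim \P(M)=e-1$, and then invoke formula (\ref{degdelta}): $\deg \delta(\P(M);t)=e-r_0$, where $r_0$ is the least positive integer such that $r_0\P(M)$ contains a lattice point in its relative interior. It is enough to prove $r_0\geq 2$, i.e.\ that $\P(M)$ itself has no interior lattice points. Since $\P(M)$ is the convex hull of $0/1$-vectors (characteristic vectors of spanning trees), one has $\P(M)\subseteq [0,1]^m$, and the only lattice points of $[0,1]^m$ are the vertices of the unit cube. Any such lattice point lying in $\P(M)$ is a vertex of $\P(M)$, hence never in the relative interior. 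This gives $r_0\geq 2$, and thus $\reg(F(I_M))\leq 1+(e-2)=e-1$.

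For the lower bound, the hypothesis that $F(I_M)$ is not the polynomial ring, combined with the equivalence (a)$\iff$(c) of Theorem \ref{wehope}, forces $M$ to fail to be a Freiman matroid. The implication (a)$\iff$(d) of Theorem \ref{characterization} then guarantees that the second entry $h_2$ of the $h$-vector of $F(I_M)$ is nonzero, so $\deg h(F(I_M);t)\geq 2$ and $\reg(F(I_M))\geq 3$. The potential obstacle is minimal once the preparatory machinery is in place; the only delicate point is confirming $r_0\geq 2$, which is isolated cleanly in the observation that a $0/1$-polytope has no lattice points in its relative interior.
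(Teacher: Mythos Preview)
Your argument is correct and follows essentially the same route as the paper: reduce to bounding $\deg h(F(I_M);t)=\deg\delta(\P(M);t)$ via Cohen--Macaulayness and Lemma~\ref{idp}, get the upper bound from $r_0\geq 2$ in formula~(\ref{degdelta}), and get the lower bound from Theorem~\ref{wehope} together with Theorem~\ref{characterization}. The only cosmetic difference is in the justification that $\P(M)$ has no relative-interior lattice points: the paper observes that each $x_i=0$ and $x_i=1$ is a supporting hyperplane (valid since a $2$-connected graph has no bridges), whereas you argue that any lattice point of the unit cube lying in a $0/1$-polytope must be a vertex; both are equivalent one-line facts about $0/1$-polytopes.
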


It is natural to ask, given integers $3 \leq r < e$, if there exists a $2$-connected simple graph $G$ with $e$ edges for which $\reg(F(I_M)) = r$, where $M$ is the cycle matroid of $G$ and $F(I_M)$ is the base ring of $M$.

\begin{Corollary}
\label{regularityCOR}
Let $G$ be a finite disconnected simple graph on $[n]$ with $e$ edges and $G_1, \ldots, G_s$ its connected components.  Let $c_i$ denote the number of cut vertices of $G_i$ for $1 \leq i \leq s$.  Let $M$ be the cycle matroid of $G$.  Unless $F(I_M)$ is the polynomial ring, one has
\[
3 \leq \reg(F(I_M)) \leq e - c - s.
\]
\end{Corollary}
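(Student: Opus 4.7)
The plan is to follow essentially the same route as the proof of Theorem~\ref{regularity}, but with Corollary~\ref{basedimCORCOR} in place of Lemma~\ref{basedim} to track the dimension, and with Theorem~\ref{wehope} providing the dichotomy ``polynomial ring versus genuinely non-polynomial base ring.''

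First I would observe that $F(I_M)$ is the Segre product of the base rings $F(I_{M_1}),\ldots,F(I_{M_s})$, each of which is a normal toric ring and hence Cohen--Macaulay by Hochster's theorem. Consequently $F(I_M)$ is itself Cohen--Macaulay, so $\reg(F(I_M))=1+\deg h(F(I_M);t)$. Lemma~\ref{idp} (whose proof rests only on the integer decomposition property of $\P(M)$, a property enjoyed by every matroid base polytope regardless of connectivity of the underlying graph) then identifies $h(F(I_M);t)$ with the $\delta$-polynomial $\delta(\P(M);t)$.

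Next, Corollary~\ref{basedimCORCOR} gives $\dim\P(M)=e-c-s$, where $c=c_1+\cdots+c_s$. The key geometric input is that $\P(M)$ lies in the unit cube $[0,1]^m$, so the hyperplanes $x_i=0$ and $x_i=1$ are supporting hyperplanes of $\P(M)$, and therefore no lattice point of $\ZZ^m$ lies in the relative interior of $\P(M)$. Hence the integer $r_0$ appearing in the analogue of (\ref{degdelta}) satisfies $r_0\geq 2$, and that formula, applied with the corrected ambient dimension $e-c-s+1$, yields
\[
\deg\delta(\P(M);t)\leq (\dim\P(M)+1)-2 = e-c-s-1,
\]
so that $\reg(F(I_M))\leq e-c-s$.

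For the lower bound I would argue by contraposition: assuming $F(I_M)$ is not a polynomial ring, Theorem~\ref{wehope} forces $M$ not to be a Freiman matroid, whence the equivalence (a)$\iff$(e) of Theorem~\ref{characterization}, combined with the positivity in Corollary~\ref{remarkable}, gives $h_2>0$. Therefore $\deg h(F(I_M);t)\geq 2$ and $\reg(F(I_M))\geq 3$. The only step requiring some care is the adaptation of the $\delta$-vector formalism (originally formulated for $2$-connected $G$) to the disconnected case; but since the Segre product inherits IDP from its factors and the supporting-hyperplane argument is coordinate-wise, this extension is routine and presents no real obstacle.
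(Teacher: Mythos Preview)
Your argument is correct and follows exactly the route the paper (implicitly) intends: the corollary is stated without proof because the argument for Theorem~\ref{regularity} carries over verbatim once Lemma~\ref{basedim} is replaced by Corollary~\ref{basedimCORCOR}, and you have reproduced that extension accurately, including the $0/1$-polytope observation forcing $r_0\geq 2$ and the appeal to Theorem~\ref{wehope} and Theorem~\ref{characterization} for the lower bound.

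One small caution on the first paragraph: the sentence ``each of which is a normal toric ring and hence Cohen--Macaulay by Hochster's theorem. Consequently $F(I_M)$ is itself Cohen--Macaulay'' reads as an inference from ``factors are Cohen--Macaulay'' to ``Segre product is Cohen--Macaulay'', which is false in general. The clean fix is to bypass the Segre decomposition here and note directly that $M=M_1\oplus\cdots\oplus M_s$ is itself a matroid, so $F(I_M)$ is itself a normal affine semigroup ring and hence Cohen--Macaulay by Hochster's theorem. With that adjustment the proof is complete.
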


Finally, since the toric ideal of the base ring of a cycle matroid of a finite simple graph is generated by quadratic binomials \cite{Bla}, it follows that no base ring of a cycle matroid has a linear resolution.

\medskip
\noindent
{\bf Acknowledgement.}  This paper is supported by the National Natural Science Foundation of
China (11271275) and by the Foundation of the Priority Academic Program Development of Jiangsu Higher Education Institutions.


\begin{thebibliography}{99}
\bibitem{A}  S. S. Abhyankar, Local rings of high embedding dimension,  Amer. J. Math. {\bf 89} (1967), 1073--1077.


\bibitem{Bla}
J.~Blasiak, The toric ideal of a graphic matroid is generated by quadrics,  Combinatorica {\bf 28} (2008), 283--297.

\bibitem{BH} W. Bruns and J. Herzog,  Cohen-Macaulay rings, Cambridge University Press,  Cambridge, UK, 1998.

\bibitem{BSS}K. J. B\"or\"oczky, F. Santos and O. Serra, On sumsets and convex hull,   Discrete Comput. Geom.  {\bf 52} (2014), 705--729.

\bibitem{EG}   D. Eisenbud and S. Goto, Linear free resolutions and minimal multiplicity, J. Algebra {\bf 88} (1984), 89--133.

\bibitem{EN} J. A. Eagon and D. G. Northcott, Ideals defined by matrices and a certain complex
associated with them, Proc. R. Soc. Lond. Ser. A  {\bf 269} (1962), 188--204.


\bibitem{F1} G. A. Freiman, Foundations of a structural theory of set addition, Translations of mathematical monographs 37, Amer. Math. Soc. Providence, Phode Island, 1973.

\bibitem{GW} S. Goto, K. Watanabe, On graded rings, I, J. Math. Soc. Japan {\bf 30} (1978), 179--213.

\bibitem{HH1} J. Herzog and T. Hibi, Monomial ideals,  Graduate Texts in Mathematics 260, Springer,
London, 2010.


\bibitem{HMZ} J. Herzog, M. Mohammadi  Saem and  N. Zamani, On the number of generators of powers of an ideal, {\it arXiv: 1707.07302v1}.

\bibitem{HZ} J. Herzog and G. J. Zhu, Freiman ideals, {\it arXiv: 1709.02827v1}.

\bibitem{HibiRedBook} T. Hibi, ``Algebraic Combinatorics on Convex Polytopes," Carslaw Publications, Glebe, N. S. W., Australia, 1992.


\bibitem{HT} L. T. Hoa and N. D. Tam, On some invariants of a mixed product of ideals, Arch. Math.  {\bf 94} (2010), 327--337.


\bibitem{Ho} M. Hochster, Rings of invariants of tori, Cohen--Macaulay rings generated by monomials, and polytopes,  Ann. of Math.  {\bf 96} (1972), 228--235.

\bibitem{OHH} H. Ohsugi, J. Herzog and  T. Hibi, Combinatorial pure subrings, Osaka J. Math. {\bf 37} (2000), 745--757.



\bibitem{OH1}  H. Ohsugi and  T. Hibi, Toric Ideals generated by Quadratic Binomials, J. Algebra {\bf  218} (1999), 509--527.

\bibitem{Sa} J. D. Sally, On the associated graded rings of a local Cohen--Macaulay ring, J. Math. Kyoto Univ. {\bf 17} (1977), 19--21.

\bibitem{S1} K. Shah, On the Cohen-Macaulayness of the fiber cone of an ideal, J. Algebra {\bf 143} (1991), 156--172.

\bibitem{Sta} Y. V. Stanchescu,  On the simplest inverse problem
for sums of sets in several dimensions, Combinatorica
{\bf 18} (1998), 139--149.


\bibitem{Whi}  N. L. White, The basis monomial ring of a matroid, Advances in Math. {\bf 24} (1977), 292--297.

\end{thebibliography}
\end{document}